\numberwithin{equation}{section}
\def\qed{\hfill$\Box$\vspace{12pt}}
\newtheorem{theorem}{Theorem}[section]
\newtheorem{lemma}[theorem]{Lemma}
\newtheorem{proposition}[theorem]{Proposition}
\newtheorem{definition}[theorem]{Definition}
\newtheorem{hypothesis}[theorem]{Hypothesis}
\newtheorem{remark}[theorem]{Remark}
\title{Comparison principle for stochastic heat equations driven
by $\alpha$-stable white noises}%\thanks{Supported by the National Natural Science Foundation of China (Nos. 11601431, 61701409 and 11871398); the China Postdoctoral Science Foundation (No. 2016M600813); the Natural Science Foundation of Shaanxi Province (Nos. 2017JQ1019 and 2018JQ6005) and the Fundamental Research Funds for the Central Universities (No. 3102018ZY038).}}
\author{Yongjin Wang$^{1,2}$, Chengxin Yan$^{1,}$\thanks{Corresponding author.}~~and~Xiaowen Zhou$^3$\\
{\small $^1$School of Mathematical Sciences, Nankai University, Tianjin {\rm 300071}, China}\\
{\small $^2$School of Business, Nankai University, Tianjin {\rm 300071}, China}
\\
{\small $^3$Department of Mathematics and Statistics, Concordia University, Montreal {\rm H3G 1M8}, Canada}
\\
{Email: {yjwang@nankai.edu.cn};
~{cxyan@mail.nankai.edu.cn}};
\\
~{xiaowen.zhou@concordia.ca}}
\date{}
\begin{document}

%\openup 0.5\jot
\maketitle
%\linenumbers

\begin{abstract}
For a class of non-linear stochastic heat equations driven by $\alpha$-stable white noises for
$\alpha\in(1,2)$ with Lipschitz coefficients,
we first show the existence and pathwise uniqueness of
$L^p$-valued c\`{a}dl\`{a}g solutions to such an equation
for $p\in(\alpha,2]$
by considering a sequence of approximating stochastic heat equations driven by truncated
 $\alpha$-stable white noises obtained by removing the big jumps from the original $\alpha$-stable white noises.
 If the $\alpha$-stable white noise is 
 spectrally one-sided,
under additional monotonicity assumption on
noise coefficients, we prove
a comparison theorem on  the $L^2$-valued c\`{a}dl\`{a}g solutions of such an equation.
As a consequence, the non-negativity of the
$L^2$-valued  c\`{a}dl\`{a}g solution
is established for the above stochastic heat equation with 
non-negative initial function.
\bigskip

\noindent\textbf{Keywords:} Stochastic heat equations;
$\alpha$-stable white noises; 
truncated $\alpha$-stable white noises; 
comparison principle;
 non-negative solutions.

\bigskip

\noindent{{\bf MCS Subject Classification (2020):} Primary: 60H15; secondary: 60G52, 35B51.}
\end{abstract}

%\linenumbers

\section{Introduction}
%\label{eq}
In this paper we study the comparison
principle for a class of stochastic
heat equations. More precisely,
we want to show that if both the initial functions and the
drift coefficients are ordered, then
the solutions of the
following non-linear stochastic heat equation
\begin{equation}
\label{eq:originalequation1}
\left\{\begin{array}{lcl}
\dfrac{\partial u(t,x)}{\partial t}=\dfrac{1}{2}
\dfrac{\partial^2u(t,x)}{\partial x^2}+f(t,x,u(t,x))
\\[0.3cm]
\quad\quad\quad\quad\,\,\,
+\varphi(t-,x,u(t-,x))\dot{L}_{\alpha}(t,x),
&& (t,x)\in (0,T]
\times(0,L),\\[0.3cm]
u(0,x)=u_0(x),&&x\in[0,L],\\[0.3cm]
u(t,0)=u(t,L)=0,&& t\in[0,T],
\end{array}\right.
\end{equation}
are also ordered. In equation (\ref{eq:originalequation1}), $T>0,L>0$ are arbitrary fixed constants,
$\dot{L}_{\alpha}\equiv\{\dot{L}_{\alpha}(t,x):(t,x)\in[0,T]\times[0,L]\}$
denotes an $\alpha$-stable white
noise on $[0,T]\times[0,L]$
with $\alpha\in(1,2)$, the initial function
$u_0$  can be random,
functions $f:[0,T]\times[0,L]\times\mathbb{R}\rightarrow\mathbb{R}$ and
$\varphi:[0,T]\times[0,L]\times\mathbb{R}\rightarrow\mathbb{R}$ are
the drift coefficient and the noise coefficient, respectively.

For the main results in this paper we need  the following hypothesis:
\begin{hypothesis}
\label{hypotheses}

\begin{enumerate}
\item[(i)] Functions $f(t,x,\tilde{u})$ and 
$\varphi(t,x,\tilde{u})$  in 
equation (\ref{eq:originalequation1})  
are globally Lipschitz continuous 
%with linear growth 
in $\tilde{u}$,
that is, there exists a constant $C$ such that given any $(t,x)\in[0,\infty)\times[0,L]$,
\begin{align*}
\vert f(t,x,\tilde{u})-f(t,x,\tilde{v}) \vert+\vert \varphi(t,x,\tilde{u})-\varphi(t,x,\tilde{v})\vert\leq C\vert \tilde{u}-\tilde{v} \vert
\end{align*}
%and
%\begin{align*}
%\vert f(t,x,\tilde{u})\vert+\vert \varphi(t,x,\tilde{u})\vert\leq C(1+\vert \tilde{u}\vert),
%\end{align*}
for all  $\tilde{u},\tilde{v}\in\mathbb{R}$;
\item[(ii)] $\varphi:\tilde{u}\in\mathbb{R}\mapsto
\varphi(t,x,\tilde{u})\in\mathbb{R}$ is non-decreasing
for all $(t,x)\in[0,\infty)\times[0,L]$.
\end{enumerate}
\end{hypothesis}

The comparison principle for stochastic
partial differential equations
(SPDEs for short) driven by Gaussian (continuous) type noises
has been extensively studied, see, for example,
Chen and Kim \cite{Chen:2017}, Donati-Martin and Pardoux\cite{Donati:1993} and Moreno Flores \cite{Moreno Flores:2014} for Gaussian  space-time white noises;
Chen and Huang \cite{Chen:2019} and Xiong and Yang \cite{Xiong:2020}
for Gaussian colored noises that is white in time and colored in space; Denis et al. \cite{Denis:2009} and  Kotelenez \cite{Kotelenez:1992}
for cylindrical Brownian motions and references therein.

Although the existence and uniqueness
of solutions to SPDEs driven by
L\'{e}vy (discontinuous) type noises
have been well studied, see, for example, Albeverio et al. \cite{Albeverio:1998} for Poisson white noise;
 Bo and Wang \cite{Bo:2006},
Peszat and Zabczyk \cite{Peszat:2006} and Truman and Wu \cite{Truman:2003}
for L\'{e}vy space-time white noise;
Peszat and Zabczyk \cite{Peszat:2007} for infinite dimensional
L\'{e}vy processes;
Wang et al. \cite{Wang:2022} for truncated
$\alpha$-stable white noises;
Balan \cite{Balan:2014}, Mytnik \cite{Mytnik:2002} and 
Yang and Zhou \cite{Yang:2017} for $\alpha$-stable white noises;
Xiong and Yang \cite{Xiong:2019} for $\alpha$-stable colored noises that is white in time
and colored in space
and references therein, there are few investigations
on comparison principle for
SPDEs driven by L\'{e}vy noises.

To the best of our knowledge,
Niu and Xie \cite{Niu:2019} first prove
a comparison principle for random field solutions of stochastic heat equations driven by L\'{e}vy 
space-time white noises,
 and show the non-negativity of the solution for non-negative  initial function.
The approach for showing the comparison principle in \cite{Niu:2019}
 is first  introducing a sequence of
symmetric mollifiers and a sequence of bounded linear operators to smooth the
L\'{e}vy space-time white noise and to approximate the Laplace operator, respectively,
so that one can construct
a sequence of approximating stochastic differential equations with
semi-martingale solutions,
driven by the mollified L\'{e}vy processes,
and then show that the random field solutions of these approximating
stochastic differential equations indeed converge to the random
field solution of the original stochastic heat equation.
Consequently, the comparison principle for the
original stochastic heat equation follows by
 the comparison principle for these approximating stochastic differential equations  applying It\^{o}'s formula.

%Using the comparison principle, they also study the asymptotic  uncorrelatedness of the solutions, that is, they show that the total masses of two solutions to two different stochastic heat equations driven by the same noise become nearly uncorrelated after a long period of time.

 For a comparison principle on
 function-valued solutions
 to stochastic integro-differential equations driven by  L\'{e}vy processes,
we refer to Dareiotis and Gy\"{o}ngy \cite{Dareiotis:2014} and references therein.
Note that the above two studies are both based on a crucial hypothesis that the noises are square integrable that excludes the case of
$\alpha$-stable noises
or general heavy-tailed noises, which motivates our consideration in the present paper.

The main contribution of this paper is to study the existence and pathwise uniqueness of function-valued solutions to equation (\ref{eq:originalequation1}) and the associated  comparison principle.
We first show that there exists a pathwise unique strong $L^p([0,L])$-valued
 c\`{a}dl\`{a}g solution to equation
(\ref{eq:originalequation1}) for $p\in(\alpha,2]$
by constructing a sequence of approximating stochastic heat equations driven by truncated
$\alpha$-stable white noises.
More precisely, the approach is
to first solve an equation of the form (\ref{eq:originalequation1}) driven by truncated $\alpha$-stable white noises
obtained by removing all the big jumps of size exceeding
a fixed value $K$ from $\dot{L}_{\alpha}$, which
results in a  pathwise unique strong $L^p([0,L])$-valued
c\`{a}dl\`{a}g solution $u^K_t$  for $p\in(\alpha,2]$,
and then show that any two solutions
$u_t^M$ and $u^K_t$ are consistent.
Such a localization method is similar to that in
Balan \cite{Balan:2014} for showing the
existence of random field solutions
and to that in Peszat and Zabczyk
\cite{Peszat:2006} for showing the existence of weak
Hilbert-space-valued solutions to SPDEs driven by $\alpha$-stable white noises.

We then show the comparison principle for $L^2([0,L])$-valued (Hilbert-space-valued) c\`{a}dl\`{a}g solutions to equation (\ref{eq:originalequation1}) with ordered initial functions and drift coefficients
using the method of convolution approximation.
The key of this approach is  to take 
convolution of the equations driven by the truncated
$\alpha$-stable white noise with  the fundamental
solution of heat equation with
homogeneous Dirichlet boundary condition
to obtain a series of
real value semi-martingale
so that  one can use the classical It\^{o} formula 
to obtain the associated comparison principle.
As a consequence, the comparison principle for equation (\ref{eq:originalequation1}) then follows from the localization method mentioned earlier.
The idea of convolution approximation is also applied in
\cite{Xiong:2020}
in showing a comparison principle
for stochastic heat equations with non-homogeneous
boundary conditions
driven by  Gaussian colored noises that are white in time and colored in space.
Compared to \cite{Xiong:2020}, an additional monotonicity assumption on the noise coefficient
$\varphi$ and more technical issues arise in our setting because of the discontinuous
$\alpha$-stable
white noises with $\alpha\in(1,2)$ in equation (\ref{eq:originalequation1}).

Applying the comparison principle,  we also show that for non-negative initial function there exists a non-negative strong $L^2([0,L])$-valued c\`{a}dl\`{a}g solution to equation (\ref{eq:originalequation1}) under the hypothesis.

Our comparison principle respectively generalizes those in \cite{Donati:1993} and \cite{Niu:2019} in some sense.
%Specifically, we generalize the results of \cite{Donati:1993} to general $\alpha$-stable noise including the jumping cases, since in the case of $\alpha=2$, $\alpha$-stable white noise is indeed the Gaussian space-time white noise in \cite{Donati:1993}. On the other hand, we generalizes the results in \cite{Niu:2019} such that noise does not have to be square integrable.
However,
%our comparison principle and the results in \cite{Donati:1993} and \cite{Niu:2019}, that is,
our comparison principle is on function-valued c\`{a}dl\`{a}g  solutions, while the random field  solutions are considered in \cite{Donati:1993} and \cite{Niu:2019}.

The rest of this paper is organized as follows. In the next section,  some notation and main results of the existence and  pathwise uniqueness of solutions, the comparison principle, and the non-negativity of the solution to equation (\ref{eq:originalequation1}) are stated.
Section \ref{sec3} contains proofs of the
existence and  pathwise uniqueness of strong $L^p([0,L])$-valued  c\`{a}dl\`{a}g solutions to equation (\ref{eq:originalequation1})
 for $p\in(\alpha,2]$.  Section \ref{sec4} contains proofs of the comparison principle on the strong $L^2([0,L])$-valued c\`{a}dl\`{a}g solutions to equation (\ref{eq:originalequation1}). 
In addition, a  method based on the It\^{o} formula for infinite dimensional semi-martingales 
for proving the comparison principle is briefly summarized
at the end of this section.
% and the non-negativity of the solution  to equation (\ref{eq:originalequation1}) with non-negative initial function and drift coefficient.

\section{Notation and main results}
\label{sec2}
In this section we review some basic facts about
$\alpha$-stable white noises and
stochastic heat equations  mainly to state the notation, and then list the main contributions of this paper.

\subsection{Notation}
\label{sec2.1}
Let $(\Omega, \mathcal{F}, \mathbf{P})$ be a complete probability space equipped with
a filtration
$(\mathcal{F}_t)_{t\geq0}$ satisfying the usual
conditions.
Let $(U_n)$ be a disjoint partition of $\mathbb{R}\setminus\{0\}$ such that
$\nu_{\alpha}(U_n)<\infty$ for each $n$,
where $\nu_{\alpha},\alpha\in(1,2)$ is
the so-called
L\'{e}vy measure given by
\begin{align}
\label{eq:jumpsizemeasure}
\nu_{\alpha}(dz)=(c_{+}z^{-\alpha-1}1_{(0,\infty)}(z)+c_{-}(-z)^{-\alpha-1}1_{(-\infty,0)}(z)
)dz
\end{align}
with $c_{+}+c_{-}=1$. Given $T,L>0$,
let $(\xi_j^{n}),(x_j^{n}),(z_j^{n})$
be independent random variables
 on $(\Omega, \mathcal{F}, \mathbf{P})$
taking values in $[0,T]$, $[0,L]$,
and $\mathbb{R}\setminus\{0\}$, respectively, with their distributions specified by
%{\color{blue} 
% $$\mathbf{P}[\xi_j^{n}>t]=\exp(-Lt\nu_{\alpha}(U_n)), \, t\in[0,T],$$
% $$\mathbf{P}[x_j^{n}\in A]=|
%A\cap[0,L]|/L,\, A\in\mathcal{B}([0,L])$$
% and
%$$\mathbf{P}[z_j^{n}\in B]=\nu_{\alpha}(B\cap U_n)
%/\nu_{\alpha}(U_n),\, B\in\mathcal{B}(\mathbb{R}\setminus\{0\}),$$}
\begin{align*}
%\label{eq:rv1}
\mathbf{P}\left[\xi_j^{n}>t\right]=
\exp(-Lt\nu_{\alpha}(U_n)), \, t\in[0,T],
\end{align*}
\begin{align*}
%\label{eq:rv2}
\mathbf{P}\left[x_j^{n}\in A\right]=\dfrac{|
A\cap[0,L]|}{L},\quad A\in\mathcal{B}([0,L]),
\end{align*}
and
\begin{align*}
%\label{eq:rv3}
\mathbf{P}\left[z_j^{n}\in
B\right]=\dfrac{\nu_{\alpha}(B\cap U_n)}
{\nu_{\alpha}(U_n)},\quad B\in\mathcal{B}(\mathbb{R}\setminus\{0\}),
\end{align*}
respectively, where
$|\cdot|$ denotes the Lebesgue measure and $\mathcal{B}(\cdot)$ denotes the
Borel $\sigma$-field.
%of the set $A\cap[0,L]$.

For any $n,j\geq1$ set $\tau_j^{n}=\xi_1^n+\ldots+\xi_j^n$,
%\begin{align}
%\label{eq:rv4}
%\tau_j^{n}:=\xi_1^n+\ldots+\xi_j^n.
%\end{align}
then
\begin{align}
\label{Poissonmeasure}
N(dt,dx,dz):=\sum_{n,j\geq1}
\delta_{(\tau_j^{n},x_j^{n},z_j^{n})}(dt,dx,dz)
\end{align}
is a Poisson random measure with the
intensity measure $dtdx\nu_{\alpha}(dz)$,
 where $\delta$ denotes the Dirac
delta distribution.
Given $T>0$,
let $\{L_{\alpha}(t,dx),t\in[0,T]\}$
be a sign-measure valued
process formally defined by
\begin{align*}
L_{\alpha}(t,dx):=\sum_{\tau_j^{n}\leq t}
z_j^{n}\delta_{x_j^{n}}(dx)-tdx
\int_{\mathbb{R}\setminus\{0\}}z\nu_{\alpha}(dz).
\end{align*}
By Peszat and Zabczyk
\cite[Example 7.26]{Peszat:2007},
$L_{\alpha}(t,dx)$ is an $\alpha$-stable
martingale measure and the corresponding distribution-valued derivative
$\{\dot{L}_{\alpha}(t,x):t\in[0,T],x\in[0,L]\}$
is an $\alpha$-stable
white noise.
%In particular, for $c_{+}=c_{-}=1/2$ in
%(\ref{eq:jumpsizemeasure}),
%$\dot{L}_{\alpha}(t,x)$ is called the symmetrical $\alpha$-stable white noise.
Moreover, by (\ref{Poissonmeasure}),
\begin{align}
\label{def:stablenoise}
L_{\alpha}(dt,dx)=&\int_{\mathbb{R}\setminus\{0\}}z(N(dt,dx,dz)-dtdx\nu_{\alpha}(dz))
\nonumber\\
=&\int_{\mathbb{R}\setminus\{0\}}z\tilde{N}(dt,dx,dz),
\end{align}
where $\tilde{N}(dt,dx,dz)=N(dt,dx,dz)-dtdx\nu_{\alpha}(dz)$
is the so-called  compensated
Poisson random measure.
%and let
%\begin{align*}
%N(dt,dx,dz): [0,T]\times[0,L]\times
%\mathbb{R}\setminus\{0\}\rightarrow \mathbb{N}
%\cup\{0\}\cup\{\infty\}
%\end{align*}
%be a Poisson random measure on
%$(\Omega, \mathcal{F}, (\mathcal{F}_t)_{t\geq0},
%\mathbf{P})$ with intensity measure
%$dtdx\nu_{\alpha}(dz)$, where $dtdx$ denotes the Lebesgue measure on $[0,T]\times[0,L]$ and
%the
%jump size measure $\nu_{\alpha}(dz)$ for $ 1<\alpha<2 $ is given by
%\begin{align}
%\label{eq:jumpsizemeasure}
%\nu_{\alpha}(dz)=(c_{+}z^{-\alpha-1}1_{(0,\infty)}(z)+c_{-}(-z)^{-\alpha-1}1_{(-\infty,0)}(z)
%)dz,
%\end{align}
%where $c_{+}+c_{-}=1$.
%and
%$K>0$ is an arbitrary constant.
%Define
%\begin{align*}
%\tilde{N}(dt,dx,dz):=
%N(dt,dx,dz)-dtdx\nu_{\alpha}(dz).
%\end{align*}
%Then $\tilde{N}(dt,dx,dz)$
%is the compensated Poisson random measure
%(martingale measure)
%on $[0,T]\times[0,L]\times
%\mathbb{R}\setminus\{0\}$.
%As in Balan \cite[Section 2]{Balan:2014},
%define a martingale measure
%\begin{align}
%\label{def:stablenoise}
%L_{\alpha}(dt,dx):=
%\int_{\mathbb{R}\setminus\{0\}}z\tilde{N}(dt,dx,dz)
%\end{align}
%for $(t, x)\in [0,T]\times[0,L]$.

Let $\mathcal{G}^{\alpha}$ be the class of almost
surely $\alpha$-integrable functions defined by
\begin{align*}
\mathcal{G}^{\alpha}:=\left\{\mathcal{g}\in\mathbf{B}:
\int_0^tds\int_0^L|\mathcal{g}(s,x)|^{\alpha}dx<\infty,
\mathbf{P}\text{-a.s.}\,\,\text{for all} \,\,t\in[0,T]\right\},
\end{align*}
where $\mathbf{B}$ is the space of progressively
measurable functions on
$[0,T]\times[0,L]\times\Omega$. By \cite[Section 5]{Mytnik:2002}, the
stochastic integral with respect to
$\{L_{\alpha}(ds,dx)\}$
is well defined for all
$\mathcal{g}\in \mathcal{G}^{\alpha}$.

Given  $T>0$ and $p\geq1$,
we denote by $h_t\equiv\{h(t,\cdot),t\in[0,T]\}$
the $L^p([0,L])$-valued
process equipped with  norm
\begin{equation*}
||h_t||_{p}:=\left(\int_0^L|h(t,x)|^pdx\right)^{\frac{1}{p}}.
\end{equation*}
In particular, in the case of $p=2$, we write $H=L^2([0,L])$ for
the Hilbert space with
%inner product $(\cdot,\cdot)$ and
norm $||\cdot||_H=||\cdot||_2$. 

Let $B([0,L])$ be the space of all Borel functions on $[0,L]$.
For any $b,c\in B([0,L])$ define
\begin{align*}
\langle b,c\rangle:=\int_0^L b(x)c(x)dx
\end{align*} 
if it exists. Let $C([0,L])$ be the space of all continuous functions
on $[0,L]$, and let $C^n([0,L])$ be the subset of $C([0,L])$ with
the bounded continuous derivatives up to the order $n\geq1$.

Throughout this paper, $C$ denotes an arbitrary
positive constant whose value might vary
from line to line.
If $C$ depends on some parameters such as $p$ and $T$,
we denote it by $C_{p,T}$.

Let $G_t(x,y)$ be the fundamental solution of the following
heat equation with homogeneous Dirichlet boundary conditions:
\begin{align}
\label{eq:fund-solu}
\left\{\begin{array}{lcl}
\dfrac{\partial G_t(x,y)}{\partial t}=\dfrac{1}{2}
\dfrac{\partial^2G_t(x,y)}{\partial x^2},
&& t\in (0,T],\,\,x,y\in(0,L),\\[0.3cm]
\lim_{t\downarrow0}G_t(x,y)=\delta_y(x),&&x,y\in[0,L],\\[0.3cm]
G_t(x,0)=G_t(x,L)=0,&& t\in[0,T],x\in[0,L].
\end{array}\right.
\end{align}
Its explicit formula
(see, e.g., Feller \cite[Page 341]{Feller:1971})
is specified by
\begin{equation*}
G_t(x,y)=\dfrac{1}{\sqrt{2\pi t}}\sum_{k=-\infty}^{+\infty}\left\{
\exp\left(-\dfrac{(2kL+x-y)^2}{2t}\right)
-\exp\left(-\dfrac{(2kL-x-y)^2}{2t}
\right)\right\}
\end{equation*}
for $t\in(0,T],x,y\in[0,L]$.
Moreover, it holds that for any $s,t\in[0,T]$ and $x,y,z\in[0,L]$,  
\begin{equation}
\label{eq:Greenetimation0}
\int_0^LG_t(x,y)dy\leq C_T,
%\int_{\mathbb{R}}G(t;x,y)dx=C_T;
\end{equation}
\begin{equation}
\label{eq:Greenetimation1}
%G_t(x,y)=G_t(y,x),\,\,
\int_0^LG_s(x,y)G_t(y,z)dy=G_{t+s}(x,z),
\end{equation}
\begin{equation}
\label{eq:Greenetimation2}
%G_t(x,y)\leq Ct^{-\frac{1}{2}},\,
\int_0^L|G_t(x,y)|^pdy\leq Ct^{-\frac{p-1}{2}},\,\, p\geq1,
\end{equation}
and by setting $G_t^x=G_t(x,\cdot)$ 
and $G_t^y=G_t(\cdot,y)$, it holds that
for any function $h\in L^p([0,L]),p\geq1$,
\begin{align}
\label{eq:convolution}
\lim_{t\downarrow0}||\langle h,G_t^x\rangle-h||_p^p=0,\,\,\,
\lim_{t\downarrow0}||\langle h,G_t^y\rangle-h||_p^p=0,\,\,\,x,y\in[0,L].
\end{align}

\subsection{Main results}
\label{sec2.2}
Stochastic heat
equation (\ref{eq:originalequation1}) is a formal SPDE. 
Given $T,L>0$,
by a solution  to
equation (\ref{eq:originalequation1})
we mean a process $u_t\equiv\{u(t,\cdot), t\in[0,T]\}$ taking 
values from measurable functions on $[0, L]$, adapting to the 
filtration generated by $L_{\alpha}$ and satisfying the 
following weak (variational) form equation:
\begin{align}
\label{eq:variationform}
\langle u_t,\phi\rangle&=\langle u_0,\phi\rangle+\dfrac{1}{2}\int_0^t
\langle u_s,\phi{''}\rangle ds+\int_0^t\langle f(s,\cdot,u_s),\phi\rangle ds
\nonumber\\
&\quad+\int_0^{t+}\int_0^L\varphi(s-,y,u(s-,x))\phi(x)L_{\alpha}(ds,dx)
\nonumber\\
&=\langle u_0,\phi\rangle+\dfrac{1}{2}\int_0^t
\langle u_s,\phi{''}\rangle ds+\int_0^t\langle f(s,\cdot,u_s),\phi\rangle ds
\nonumber\\
&\quad+\int_0^{t+}\int_0^L\int_{\mathbb{R}\setminus\{0\}}
\varphi(s-,x,u(s-,x))\phi(x)z\tilde{N}(ds,dx,dz)
\end{align}
for all $t\in[0,T]$ and for any $\phi\in C^{2}([0,L])$
with $\phi(0)=\phi(L)=\phi^{'}(0)=\phi^{'}(L)=0$ or equivalently
satisfying the
following mild form equation:
\begin{align}
\label{eq:mildform}
u(t,x)=&\int_0^LG_t(x,y)u_0(y)dy
+\int_0^{t}\int_0^L
G_{t-s}(x,y)f(s,y,u(s,y))dsdy
\nonumber\\
&\quad+\int_0^{t+}\int_0^L
G_{t-s}(x,y)
\varphi(s-,y,u(s-,y))
L_{\alpha}(ds,dy)
\nonumber\\
=&\int_0^LG_t(x,y)u_0(y)dy
+\int_0^{t}\int_0^L
G_{t-s}(x,y)f(s,y,u(s,y))dsdy
\nonumber\\
&\quad+\int_0^{t+}\int_0^L
\int_{\mathbb{R}\setminus\{0\}}G_{t-s}(x,y)
\varphi(s-,y,u(s-,y))z\tilde{N}(ds,dy,dz)
\end{align}
for all $t\in [0, T]$ and for a.e. $x\in [0, L]$,
where the second equality in
(\ref{eq:variationform})
or
(\ref{eq:mildform}) follows from
(\ref{def:stablenoise}).
For the equivalence between the weak form (\ref{eq:variationform}) and mild form (\ref{eq:mildform}),
we refer to Walsh \cite{Walsh:1986} and references therein.

The definition
of a strong $L^p([0,L])$-valued  solution
to equation (\ref{eq:originalequation1}) for some $p\geq1$
is given as follows.
%Now we state the main theorems in this article.
\begin{definition}
\label{def:solution}
Stochastic heat equation (\ref{eq:originalequation1})
has a strong $L^p([0,L])$-valued c\`{a}dl\`{a}g
solution  with initial function $u_0$ for some $p\geq1$
if for a given $\alpha$-stable
martingale measure
$L_{\alpha}$ there exists an $L^p([0,L])$-valued
c\`{a}dl\`{a}g process $u_t\equiv\{u(t,\cdot),t\in[0,T]\}$
adapted to the filtration generated
by $L_{\alpha}$ such that
either
equation (\ref{eq:variationform})
or equation (\ref{eq:mildform}) holds.
\end{definition}

%\begin{definition}
%We say that the strong $L^p([0,L])$-valued 
%c\`{a}dl\`{a}g solutions to equation 
%(\ref{eq:originalequation1}) are pathwise unique for some $p\in(0,2]$ if it holds for any two strong $L^p([0,L])$-valued c\`{a}dl\`{a}g
%solutions $u\equiv\{u(t,\cdot),t\in[0,T]\}$, 
%$v\equiv\{v(t,\cdot),t\in[0,T]\}$ with $u_0=v_0$ that
%\begin{align*}
%\mathbf{P}\left[\int_0^L|u(t,x)-v(t,x)|^pdx=0,\,\,\text{for all}\,\,t\in[0,T]\right]=1.
%\end{align*}
%\end{definition}

We now state the main results in this paper.
The existence and pathwise uniqueness of strong $L^p([0,L])$-valued solutions
to equation (\ref{eq:originalequation1}) for
$p\in(\alpha,2]$ is first given by the following theorem.

\begin{theorem}(Existence and uniqueness)
\label{th:existence}
Given $T>0$,
if the initial function $u_0$ satisfies
$\mathbf{E}[||u_0||_p^p]<\infty$ 
for some $p\in(\alpha,2]$,
then  under  {\rm\textbf{Hypothesis}} {\rm\ref{hypotheses} \rm(i)}
there exists a  pathwise unique strong
$L^p([0,L])$-valued c\`{a}dl\`{a}g solution
$u_t\equiv\{u(t,\cdot),t\in[0,T]\}$
to equation (\ref{eq:originalequation1})
and exists a sequence of stopping times $(R_K)_{K\geq1}$ with $R_K\uparrow+\infty$ $\mathbf{P}$-a.s. as $K\uparrow+\infty$
such that for any $K\geq1$
\begin{equation}
\label{eq:moment}
\sup_{0\leq t\leq T}\mathbf{E}\left[||u_t1_{\{t< R_K\}}||_p^p\right]<\infty.
\end{equation}
\end{theorem}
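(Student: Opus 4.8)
The plan is to follow the localization strategy outlined in the introduction: solve first the equation driven by the truncated noise, then glue the truncated solutions together across a sequence of stopping times that capture the arrival of large jumps. For each $K\geq1$, let $\tilde{N}^K$ denote the compensated Poisson random measure obtained by restricting $N$ to jumps of size $|z|\leq K$, and consider the truncated mild equation obtained from (\ref{eq:mildform}) by replacing $\tilde{N}$ with $\tilde{N}^K$ in the stochastic integral. The crucial gain is that the restricted L\'evy measure has a finite $p$-th moment: since $\nu_\alpha(dz)\asymp|z|^{-\alpha-1}dz$, one has $\int_{|z|\leq K}|z|^p\,\nu_\alpha(dz)<\infty$ precisely because $p>\alpha$, which is exactly where the hypothesis $p\in(\alpha,2]$ enters. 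Define $R_K$ to be the first time a jump of size exceeding $K$ occurs on $[0,T]\times[0,L]$; since such jumps form a Poisson process of finite rate $L\,T\,\nu_\alpha(\{|z|>K\})<\infty$, we have $R_K\uparrow+\infty$ $\mathbf{P}$-a.s. as $K\uparrow+\infty$.

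First I would establish existence and pathwise uniqueness of a strong $L^p$-valued c\`adl\`ag solution $u^K$ to the truncated equation by a Picard iteration in the Banach space of adapted $L^p$-valued c\`adl\`ag processes equipped with the norm $\sup_{0\leq t\leq T}\mathbf{E}[\|u(t,\cdot)\|_p^p]$. The heat-kernel contribution is handled by (\ref{eq:Greenetimation2}), while the stochastic integral is controlled by a Bichteler--Jacod-type moment inequality: for $p\in(1,2]$ and an integrand $H$, $\mathbf{E}\big[\,|\int H\,z\,\tilde{N}^K|^p\,\big]\leq C\,\mathbf{E}\big[\int|H|^p\,|z|^p\,\nu_\alpha(dz)\,ds\,dy\big]$, which follows from $x\mapsto x^{p/2}$ being subadditive. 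Combining this with the Lipschitz and linear-growth bounds on $f$ and $\varphi$ from Hypothesis \ref{hypotheses}(i), integrating over $x\in[0,L]$, and using $\int_0^L|G_{t-s}(x,y)|^p\,dx\leq C(t-s)^{-(p-1)/2}$ reduces the stochastic term to a bound of the form $C_K\int_0^t(t-s)^{-(p-1)/2}(1+\mathbf{E}[\|u(s,\cdot)\|_p^p])\,ds$. Since $(p-1)/2<1$ for $p\leq2$, the singular kernel is integrable in time, so a generalized Gronwall inequality yields a finite uniform moment bound together with the contraction estimate needed for the fixed point; uniqueness for the truncated equation follows from the same computation applied to the difference of two solutions.

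Next I would prove the consistency of the truncated solutions: for $K<N$ the noises $\tilde{N}^K$ and $\tilde{N}^N$ coincide on the stochastic interval $[0,R_K)$, and hence by pathwise uniqueness $u^K(t,\cdot)=u^N(t,\cdot)$ for $t<R_K$. This lets me define a single process $u$ by setting $u(t,\cdot):=u^K(t,\cdot)$ on $\{t<R_K\}$; since $R_K\uparrow+\infty$, the process $u$ is well defined on all of $[0,T]$, is c\`adl\`ag and adapted, and solves (\ref{eq:mildform}) because $L_\alpha$ and its $K$-truncation agree before $R_K$. The moment estimate (\ref{eq:moment}) is then immediate: on $\{t<R_K\}$ we have $u=u^K$, so $\sup_t\mathbf{E}[\|u(t,\cdot)1_{\{t<R_K\}}\|_p^p]\leq\sup_t\mathbf{E}[\|u^K(t,\cdot)\|_p^p]<\infty$ by the previous step. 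Global pathwise uniqueness for (\ref{eq:originalequation1}) follows by localizing any two solutions at $R_K$, invoking uniqueness of the truncated equation, and letting $K\to\infty$.

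I expect the main obstacle to be the $L^p$ moment analysis of the stochastic integral for the non-square-integrable noise, specifically reconciling the time singularity of the heat kernel with the $p$-th moment inequality for jump integrals. The delicate point is that the exponent in $\int_0^L|G_{t-s}(x,y)|^p\,dx\leq C(t-s)^{-(p-1)/2}$ must remain below $1$ while $p$ stays strictly above $\alpha$, so that both the small-jump integrability $\int_{|z|\leq K}|z|^p\,\nu_\alpha(dz)<\infty$ and the temporal integrability hold simultaneously; this is precisely the constraint forcing $p\in(\alpha,2]$. A secondary technical point is verifying the c\`adl\`ag property of the $L^p$-valued solution, which I would obtain from the jump structure of the Poisson integral together with the continuity of the heat flow between jump times.
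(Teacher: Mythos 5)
Your proposal follows essentially the same route as the paper: truncation of the noise at level $K$, a Banach fixed-point/Picard argument for the truncated equation in the space of adapted $L^p$-valued c\`adl\`ag processes with norm $\sup_{0\leq t\leq T}\mathbf{E}[\|\cdot\|_p^p]$ (the paper's Bichteler--Jacod step being implemented via Burkholder--Davis--Gundy plus the subadditivity of $x\mapsto x^{p/2}$, and the key moment $\int|z|^p\nu_\alpha^K(dz)=K^{p-\alpha}/(p-\alpha)$ requiring $p>\alpha$), followed by consistency of the $u^K$ on $[0,R_K)$ and gluing via $R_K\uparrow\infty$. The only cosmetic difference is that the paper secures the contraction by working on a sufficiently small interval $[0,t_1]$ and iterating in time rather than invoking a generalized Gronwall lemma at the fixed-point stage, but this does not change the substance of the argument.
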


The sequence of stopping times $(R_K)_{K\geq1}$  in
Theorem \ref{th:existence} is defined in
(\ref{eq:stoppingtimes}) with respect to the filtrations generated by the truncated
$\alpha$-stable white noise
$(\dot{L}_{\alpha}^K)_{K\geq1}$ defined in
(\ref{eq:trancatednoise}). The
proof of Theorem \ref{th:existence} is deferred to Section \ref{sec3} below.

To show the comparison theorem for equation (\ref{eq:originalequation1}) with different initial functions and drift coefficients, we consider $H$-valued solutions
 to equation (\ref{eq:originalequation1}).
For this, we further assume that
$\mathbf{E}[||u_0||_H^2]<\infty$.
By Theorem \ref{th:existence} with $p=2$,
%the assumption
%$\mathbf{E}[||u_0(\cdot)||_H^2]<\infty$
%implies that 
there exists a pathwise
unique strong $H$-valued c\`{a}dl\`{a}g solution
$u_t\equiv\{u(t,\cdot),t\in[0,T]\}$
satisfying
\begin{equation*}
\label{eq:momentu}
\sup_{0\leq t\leq T}\mathbf{E}\left[||u_t1_{\{t<R_K\}}||_H^2\right]<\infty.
\end{equation*}

%Similar to equation (\ref{eq:originalequation1}),
To present the comparison principle, we further consider the following
non-linear stochastic heat equation driven by the same $\alpha$-stable
white noise $\dot{L}_{\alpha}$ with the same noise coefficient $\varphi$ as in equation (\ref{eq:originalequation1}),
that is,
\begin{equation}
\label{eq:originalequation2}
\left\{\begin{array}{lcl}
\dfrac{\partial v(t,x)}{\partial t}=\dfrac{1}{2}
\dfrac{\partial^2v(t,x)}{\partial x^2}+g(t,x,v(t,x))
\\[0.3cm]
\quad\quad\quad\quad\,\,\,
+\varphi(t-,x,v(t-,x))\dot{L}_{\alpha}(t,x),
&& (t,x)\in (0,T]
\times(0,L),\\[0.3cm]
v(0,x)=v_0(x),&&x\in[0,L],\\[0.3cm]
v(t,0)=v(t,L)=0,&& t\in[0,T],
\end{array}\right.
\end{equation}
where the initial function $v_0$
is different from $u_0$ in equation (\ref{eq:originalequation1})
and satisfies $\mathbf{E}[||v_0||_H^2]<\infty$,
%for any $p\in(0,2]$,
the drift coefficient
 $g:[0,T]\times[0,L]\times\mathbb{R}\rightarrow\mathbb{R}$
and
noise coefficient
$\varphi:[0,T]\times[0,L]\times\mathbb{R}\rightarrow\mathbb{R}$ also
satisfy {\rm\textbf{Hypothesis}} \ref{hypotheses}.
Then by Theorem \ref{th:existence} with $p=2$,
there exists a pathwise unique
strong $H$-valued c\`{a}dl\`{a}g solution
$v_t\equiv\{v(t,\cdot),t\in[0,T]\}$
to equation (\ref{eq:originalequation2})
such that for any $K\geq1$
\begin{equation*}
\label{eq:momentv}
\sup_{0\leq t\leq T}\mathbf{E}\left[||v_t1_{\{t<R_K\}}||_H^2\right]<\infty
\end{equation*}
for the same
stopping times $(R_K)_{K\geq1}$ as in Theorem \ref{th:existence}.

We next state the comparison theorem on the
strong $H$-valued c\`{a}dl\`{a}g solutions $u_t$ and $v_t$
 to equations (\ref{eq:originalequation1}) and (\ref{eq:originalequation2}), respectively.

\begin{theorem}(Comparison principle)
\label{th:comparison}
%Given $T>0$, 
Suppose that  {\rm\textbf{Hypothesis}} {\rm\ref{hypotheses}} holds and $c_{-}=0$ in (\ref{eq:jumpsizemeasure}).
Let
$u_t\equiv\{u(t,\cdot),t\in[0,T]\}$ and
$v_t\equiv\{v(t,\cdot),t\in[0,T]\}$ be the strong
$H$-valued c\`{a}dl\`{a}g solutions
to  equations (\ref{eq:originalequation1}) and (\ref{eq:originalequation2}),
%{\color{red}driven by the same $\alpha$-stable noise $\dot{L}_{\alpha}$},
%satisfying (\ref{eq:momentu}) and (\ref{eq:momentv})
respectively. 
If
%\begin{align*}
$u_0(x)\leq v_0(x)\,\, for\,\text{a.e.}\,\,x\in[0,L]$
and
	%\begin{align*}
		$f(t,x,y)\leq g(t,x,y)\,\,\text{for all}\,\,(t,x,y)\in[0,T]\times[0,L]\times\mathbb{R}$,
	%\end{align*}}
%\end{align*}
 we have
%we have for a.e. $x\in[0,L]$ that
\begin{align*}
\mathbf{P}\left[u_t\leq v_t\,\,\,\text{for all}\,\,t\in[0,T]\right]=1.
\end{align*}
\end{theorem}

The precise proof of Theorem \ref{th:comparison} is given in Section
\ref{sec4} below.
Based on Theorem \ref{th:comparison}, one can easily
obtain that with non-negative initial function in $H$, there is
a non-negative $H$-valued
c\`{a}dl\`{a}g solution to
equation (\ref{eq:originalequation1}). More precisely,
the following theorem holds.

\begin{theorem}(Non-negative solution)
\label{th:nonnegativity}
%Given $T>0$, 
Suppose that {\rm\textbf{Hypothesis}} {\rm\ref{hypotheses}} holds
	and $c_{-}=0$ in (\ref{eq:jumpsizemeasure}).  
Let
$u_t\equiv\{u(t,\cdot),t\in[0,T]\}$ be the strong $H$-valued
c\`{a}dl\`{a}g solution to equation (\ref{eq:originalequation1}).
If
%\begin{align*}
$u_0(x)\geq 0\,\,\text{ for a.e.}\,\,x\in[0,L]$
%\end{align*}
and
%\begin{align*}
$f(t,x,0)=\varphi(t,x,0)=0\,\,
\text{for all}\,\,(t,x)\in[0,T]\times[0,L]$,
%\end{align*}
we have
\begin{align*}
\mathbf{P}\left[u_t\geq0\,\,\text{for all}\,\,t\in[0,T]\right]=1.
\end{align*}
\end{theorem}
\begin{proof}
Let $u_0(x)\equiv0$  for a.e. $x\in[0,L]$. Then  by the assumptions of
Theorem \ref{th:nonnegativity} we have that
$u_t\equiv0$
is the unique strong $H$-valued c\`{a}dl\`{a}g solution to equation
(\ref{eq:originalequation1}). Therefore,
the non-negativity of the solution $u_t$ to equation
(\ref{eq:originalequation1}) for general non-negative initial function $u_0$ follows from
Theorem \ref{th:comparison}.
\qed
\end{proof}

\begin{remark}
Note that
Theorems \ref{th:comparison} and \ref{th:nonnegativity}
can also be established under 
 {\rm Hypothesis} {\rm\ref{hypotheses}} for which
 $\varphi:\tilde{u}\in\mathbb{R}\mapsto
\varphi(t,x,\tilde{u})\in\mathbb{R}$ is non-increasing
for all $(t,x)\in[0,\infty)\times[0,L]$
and assumption $c_{+}=0$ in (\ref{eq:jumpsizemeasure});
see the crucial estimate (\ref{eq:crucial-equality}) in proof
to Theorem \ref{th:comparison} in Section \ref{sec4}
for more details.
\end{remark}

\section{Proof of Theorem \ref{th:existence}}
\label{sec3}

The proof of Theorem \ref{th:existence}
proceeds in the following three steps.
Given $T>0$, we first construct a sequence
of truncated $\alpha$-stable white noises
$(\dot{L}_{\alpha}^K)_{K\geq1}$  in (\ref{eq:trancatednoise}) and
stopping times $(R_K)_{K\geq1}$ increasing to infinity in (\ref{eq:stoppingtimes}) with respect to
the filtrations generated by $(\dot{L}_{\alpha}^K)_{K\geq1}$;
% and prove that, with probability one,
%$R_K\uparrow+\infty$ as $K\uparrow+\infty$;
see Lemma \ref{lem:stopping}. We then construct
a sequence of stochastic heat equations
(\ref{truncated-eq}) driven
by the truncated $\alpha$-stable white noises
$(\dot{L}_{\alpha}^K)_{K\geq1}$ and prove that
for any fixed $K\geq1$ there exists
a pathwise unique strong $L^p([0,L])$-valued
c\`{a}dl\`{a}g solution
$u^K_t\equiv\{u^K(t,\cdot),t\in[0,T]\}$
 to the stochastic heat equation for
$p\in(\alpha,2]$ by using the
Banach fixed point principle; see Proposition \ref{prop:existence}. 
 Finally, we proceed to show the consistency of solutions to
equations (\ref{truncated-eq}) for different $K$,
that is, it holds for any $M\geq K\geq1$ that
 $u^M_t=u^K_t$
for all $t\in[0,R_K)$; see Lemma \ref{lem:consistency}, and prove that $u_t:=u^K_t$ is indeed a pathwise unique solution of equation (\ref{eq:originalequation1}) for
$t\in[0,R_K)$. Therefore, we obtain a pathwise unique solution $u_t\equiv\{u(t,\cdot),t\in[0,T]\}$ of equation (\ref{eq:originalequation1}) by letting $K\uparrow\infty$.

We now construct a sequence
of truncated $\alpha$-stable white noise
$(\dot{L}_{\alpha}^K)_{K\geq1}$ and
stopping times $(R_K)_{K\geq1}$.
% similarly to \cite{Peszat:2006}.
%From now on we use the localization method in
%\cite{Peszat:2006} to prove
%Theorem \ref{th:existence}.
For each $K\geq1$ and $T>0$, recall Section \ref{sec2.1} and define
\begin{align*}
Y(t):=\sum_{\tau_j^{n}\leq t}
z_j^{n},\quad t\geq0,
\end{align*}
\begin{align}
\label{eq:stoppingtimes}
R_K:=\inf\{t: |Y(t)-Y(t-)|> K\},
\end{align}
\begin{align}
\label{eq:trun-jumpsize}
\nu_{\alpha}^K(dz):=(c_{+}z^{-\alpha-1}1_{(0,K]}(z)+c_{-}(-z)^{-\alpha-1}1_{[-K,0)}(z)
)dz,\,
1<\alpha<2,\, c_{+}+c_{-}=1,
\end{align}
\begin{align}
\label{eq:trancatednoise}
L_{\alpha}^K(t,dx)&:=\sum_{\tau_j^{n}\leq
t,z_j^{n}\leq K}z_j^{n}
\delta_{x_j^{n}}(dx)-tdx\int_{\mathbb{R}\setminus\{0\}}
z\nu_{\alpha}^K(dz),
%\nonumber\\
%&=\int_{\mathbb{R}\setminus\{0\}}z(N^K(dt,dx,dz)-dtdx\nu_{\alpha}^K(dz))\\
%&=\int_{\mathbb{R}\setminus\{0\}}z\tilde{N}^K(dt,dx,dz),
\end{align}
and
\begin{align}
\label{eq:Gamma}
\Gamma_{K,T}:=\left\{\omega\in \Omega: z_j^{n}\leq K\,\, \text{for all}\,\, n,j \,\,\text{for which}\,\,
\tau_j^{n}\leq T,
x_j^{n}\in[0,L]\right\}.
\end{align}

From the above definitions, it easy to see that
$(R_K)_{K\geq1}$ is a sequence of stopping
times with
respect to the filtration generated by $L_{\alpha}^K$ and $R_K\leq R_M$ for any $K\leq M$,
 and that for each $K\geq1$ and $T>0$
\begin{align}
\label{eq:stop-eq}
\{\omega:R_K> T\}=\Gamma_{K,T},
\end{align}
and that
\begin{align}
\label{eq:noise-equ}
L_{\alpha}^K(t,dx)(\omega)=L_{\alpha}(t,dx)
(\omega),\,\,\text{on}\,\,(t,\omega)\in[0,T]\times \Gamma_{K,T}.
\end{align}

Similar to
\cite[Section 19.5.1]{Peszat:2006},
 one can show that the stopping times sequence
$(R_K)_{K\geq1}$
%, defined by (\ref{eq:stoppingtimes}),
converges to infinity with probability one as
$K$ tends to infinity in the following lemma.
\begin{lemma}
\label{lem:stopping}
For each $K\geq1$ and $T>0$, we have
\begin{align*}
\mathbf{P}[R_K>T]=\exp\left(-\dfrac{TLK^{-\alpha}}{\alpha}\right),
\end{align*}
and
\begin{align}
\label{eq:stop-infinity}
\lim_{K\rightarrow+\infty}
\mathbf{P}[R_K>T]=1\,\,\text{and}\,\,
\lim_{K\rightarrow+\infty}R_K=\infty\,\,\mathbf{P}-\text{a.s.}
\end{align}
\end{lemma}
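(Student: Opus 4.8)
The plan is to read the law of $R_K$ directly off the Poisson structure of $N$: $R_K$ is simply the first time the jump process produces a jump of magnitude exceeding $K$, so its tail is governed by a Poisson count of large atoms. First I would record that, since $N$ in (\ref{Poissonmeasure}) is a Poisson random measure with intensity $dt\,dx\,\nu_\alpha(dz)$ and all the spatial marks $x_j^n$ lie in $[0,L]$, a jump of $Y$ of size bigger than $K$ occurring before time $T$ corresponds exactly to an atom of $N$ in the set $[0,T]\times[0,L]\times\{z:|z|>K\}$. Hence $\{R_K>T\}$ is precisely the event that $N$ charges no such atom, which is the event $\Gamma_{K,T}$ of (\ref{eq:Gamma}) already identified below that display as $\{\omega:R_K>T\}=\Gamma_{K,T}$. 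The only conceptual care needed here is that, although the $\alpha$-stable noise has infinitely many small jumps on $[0,T]$, $R_K$ depends only on jumps of size exceeding $K$, of which there are $\mathbf{P}$-a.s. finitely many on $[0,T]\times[0,L]$, so the event is genuinely a finite-intensity no-atom event.

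Next I would compute the mean of the relevant Poisson count. The intensity of $[0,T]\times[0,L]\times\{z:|z|>K\}$ factorizes as $T\cdot L\cdot\nu_\alpha(\{z:|z|>K\})$, and evaluating the tail of the L\'evy measure (\ref{eq:jumpsizemeasure}),
\begin{align*}
\nu_\alpha(\{z:|z|>K\})=c_+\int_K^{\infty}z^{-\alpha-1}\,dz+c_-\int_K^{\infty}z^{-\alpha-1}\,dz=(c_++c_-)\frac{K^{-\alpha}}{\alpha}=\frac{K^{-\alpha}}{\alpha},
\end{align*}
where the second integral arises from the substitution $z\mapsto-z$ in the negative tail and where I used $c_++c_-=1$. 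Since the number of atoms of $N$ in a set of finite intensity is Poisson with that mean, the probability of no atom is the exponential of the negative mean, so
\begin{align*}
\mathbf{P}[R_K>T]=\mathbf{P}[\Gamma_{K,T}]=\exp\!\left(-TL\,\nu_\alpha(\{z:|z|>K\})\right)=\exp\!\left(-\frac{TLK^{-\alpha}}{\alpha}\right),
\end{align*}
which is the first assertion; equivalently $R_K$ is the first arrival of a Poisson process of rate $LK^{-\alpha}/\alpha$ and is therefore exponential with that rate. Letting $K\to+\infty$ makes $K^{-\alpha}\to0$, so the exponent tends to $0$ and $\mathbf{P}[R_K>T]\to1$, giving the first limit in (\ref{eq:stop-infinity}).

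For the almost-sure convergence I would exploit the monotonicity $R_K\leq R_L$ for $K\leq L$ already noted in the text, so that $R_\infty:=\lim_{K\to\infty}R_K=\sup_{K\geq1}R_K$ exists in $[0,+\infty]$. Fixing $T>0$ and using that $R_\infty\leq T$ forces $R_K\leq T$ for every $K$, I get
\begin{align*}
\mathbf{P}[R_\infty\leq T]\leq\inf_{K\geq1}\mathbf{P}[R_K\leq T]=1-\sup_{K\geq1}\mathbf{P}[R_K>T]=1-\lim_{K\to\infty}\exp\!\left(-\frac{TLK^{-\alpha}}{\alpha}\right)=0,
\end{align*}
where the supremum equals the limit because $\mathbf{P}[R_K>T]=\exp(-TLK^{-\alpha}/\alpha)$ is non-decreasing in $K$. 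Taking a union over $T\in\mathbb{N}$ and using countable subadditivity yields $\mathbf{P}[R_\infty<\infty]=0$, that is, $R_K\uparrow+\infty$ $\mathbf{P}$-a.s.

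As for difficulty, the argument is essentially routine once the point-process picture is in place: the genuine (if mild) obstacle is the conceptual identification of $\{R_K>T\}$ with the finite-intensity no-large-jump event $\Gamma_{K,T}$ in the presence of infinitely many small jumps, after which the distributional formula is an immediate Poisson computation and the a.s. limit follows purely from monotonicity rather than from any quantitative jump estimate.
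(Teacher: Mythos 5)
Your proof is correct and is exactly the standard Poisson no-atom computation that the paper itself does not write out but delegates to Peszat--Zabczyk: the identification of $\{R_K>T\}$ with the event that $N$ has no atom in $[0,T]\times[0,L]\times\{|z|>K\}$, the tail evaluation $\nu_\alpha(\{|z|>K\})=(c_++c_-)K^{-\alpha}/\alpha=K^{-\alpha}/\alpha$, and the monotonicity argument for the almost-sure divergence are all as intended. One small remark: for your identification to match the paper's $\Gamma_{K,T}$ in (\ref{eq:Gamma}) literally, the condition $z_j^{n}\leq K$ there must be read as $|z_j^{n}|\leq K$, since $R_K$ in (\ref{eq:stoppingtimes}) is defined through $|Y(t)-Y(t-)|>K$ and the stated formula indeed requires the full two-sided tail; your version is the consistent one.
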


With the truncated $\alpha$-stable white noises
$(\dot{L}_{\alpha}^K)_{K\geq1}$ in hand,
for each fixed $K\geq1$, we construct the approximating stochastic heat equation
for equation (\ref{eq:originalequation1})
as follows:
\begin{align}
\label{truncated-eq}
\left\{\begin{array}{lcl}
\dfrac{\partial u^K(t,x)}{\partial t}=\dfrac{1}{2}
\dfrac{\partial^2u^K(t,x)}{\partial x^2}+f(t,x,u^K(t,x))
\\[0.3cm]
\quad\quad\quad\quad\quad\,\,
+\varphi(t-,x,u^K(t-,x))\dot{L}^K_{\alpha}(t,x),
&& (t,x)\in (0,T]
\times(0,L),\\[0.3cm]
u^K(0,x)=u_0(x),&&x\in[0,L],\\[0.3cm]
u^K(t,0)=u^K(t,L)=0,&& t\in[0,T],
\end{array}\right.
\end{align}
where the drift coefficient $f$, the noise coefficient $\varphi$ and the initial function
$u_0$ are the same as in equation
(\ref{eq:originalequation1}).

Similar to (\ref{Poissonmeasure}), for
each $K\geq1$,  define
\begin{align*}
 N^K(dt,dx,dz):=\sum_{{z_j^n\leq K}}
\delta_{(\tau_j^{n},x_j^{n},z_j^{n})}(dt,dx,dz).
\end{align*}
Then $N^K(dt,dx,dz)$ is a Poisson random measure
with  intensity measure $dtdx\nu_{\alpha}^K(dz)$, 
where $\nu_{\alpha}^K$ is given by
(\ref{eq:trun-jumpsize}). By (\ref{eq:trancatednoise}), one can show
that
\begin{align*}
L_{\alpha}^K(dt,dx)
&=\int_{\mathbb{R}\setminus\{0\}}z(N^K(dt,dx,dz)-dtdx\nu_{\alpha}^K(dz))\\
&=\int_{\mathbb{R}\setminus\{0\}}z\tilde{N}^K(dt,dx,dz),
\end{align*}
where $\tilde{N}^K(dt,dx,dz)=N^K(dt,dx,dz)-dtdx\nu_{\alpha}^K(dz)$ is the compensated
Poisson random measure corresponding to the truncated
$\alpha$-stable measure
$L_{\alpha}^K$.

Given $K\geq1$, by a solution to
equation (\ref{truncated-eq})
we mean a process $u_t^K\equiv\{u^K(t,\cdot), t\in[0,T]\}$ taking 
values from measurable functions on $[0,L]$, adapting to the 
filtration generated by $L_{\alpha}^K$ and satisfying the following
weak form equation:
\begin{align}
\label{eq:weak-truncted}
\langle u_t^K,\phi\rangle
&=\langle u_0,\phi\rangle+\dfrac{1}{2}\int_0^t
\langle u_s^K,\phi{''}\rangle ds+\int_0^t
\langle f(s,\cdot,u_s^K),\phi\rangle ds
\nonumber\\
&\quad+\int_0^{t+}\int_0^L\int_{\mathbb{R}\setminus\{0\}}
\varphi(s-,x,u^K(s-,x))\phi(x)z\tilde{N}^K(ds,dx,dz)
\end{align}
for all $t\in[0,T]$ and for any $\phi\in C^{2}([0,L])$
with $\phi(0)=\phi(L)=\phi^{'}(0)=\phi^{'}(L)=0$ or equivalently
satisfying the
following mild form equation:
\begin{align}
\label{eq:mild-truncted}
u^K(t,x)&=\int_0^LG_t(x,y)u_0(y)dy+\int_0^t\int_0^LG_{t-s}(x,y)f(s,y,u^K(s,y))dsdy
\nonumber\\
&\quad+\int_0^{t+}\int_0^L
\int_{\mathbb{R}\setminus\{0\}}G_{t-s}(x,y)
\varphi(s-,y,u^K(s-,y))z\tilde{N}^K(ds,dy,dz)
\end{align}
for all $t\in [0, T]$ and  for a.e. $x\in [0, L]$.

According to Definition \ref{def:solution},
 stochastic heat equation (\ref{truncated-eq})
has a strong $L^p([0,L])$-valued c\`{a}dl\`{a}g solution with initial function $u_0$ for some $p\geq1$,
if for a given truncated $\alpha$-stable
martingale measure
$L_{\alpha}^K$ defined by (\ref{eq:trancatednoise})
there exists a $L^p([0,L])$-valued
c\`{a}dl\`{a}g process $u_t^K\equiv\{u^K(t,\cdot), t\in[0,T]\}$
such that either equation (\ref{eq:weak-truncted}) or
equation (\ref{eq:mild-truncted}) holds.

We now show that there exists a pathwise
unique strong $L^p([0,L])$-valued
c\`{a}dl\`{a}g solution $u_t^K\equiv\{u^K(t,\cdot), t\in[0,T]\}$
to equation
(\ref{truncated-eq}) for $p\in(\alpha,2]$ 
by using Banach fixed point
principle in the following proposition. The same method was also
applied in \cite{Truman:2003} for showing the existence and uniqueness of 
solution to stochastic Burgers equation driven by L\'{e}vy space-time white
noise.

\begin{proposition}
\label{prop:existence}
Given $K\geq1$ and $T>0$,
If the initial function $u_0$ satisfies
$\mathbf{E}[||u_0||_p^p]<\infty$ 
for some $p\in(\alpha,2]$,
then under 
{\rm\textbf{Hypothesis}} {\rm\ref{hypotheses}} {\rm(i)}
there exists a pathwise unique strong
$L^p([0,L])$-valued c\`{a}dl\`{a}g solution
$u_t^K\equiv\{u^K(t,\cdot),t\in[0,T]\}$
to equation (\ref{truncated-eq})
and
\begin{equation}
\label{eq:truncated-moment}
\sup_{0\leq t\leq T}\mathbf{E}\left[||u^K_t||_p^p\right]<\infty.
\end{equation}
\end{proposition}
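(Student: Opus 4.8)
The plan is to realize the mild solution as the unique fixed point of the integral operator $\mathcal{T}$ defined by the right-hand side of the mild form (\ref{eq:mild-truncted}), acting on a suitable Banach space of adapted processes. Concretely, I would work on the space $\mathcal{B}_{p,T}$ of $L^p([0,L])$-valued, adapted, c\`{a}dl\`{a}g processes $u$ for which the weighted norm $\|u\|_\lambda:=\sup_{0\le t\le T}e^{-\lambda t}\big(\mathbf{E}[\,\|u(t,\cdot)\|_p^p\,]\big)^{1/p}$ is finite, where $\lambda>0$ is a parameter to be fixed large at the very end. The first task is to verify that $\mathcal{T}$ maps $\mathcal{B}_{p,T}$ into itself by bounding the $p$-th moment of each of the three terms in (\ref{eq:mild-truncted}).

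For the initial-data term I would use the $L^p$-contractivity of the heat semigroup; for the drift term I would combine the linear growth bound in \textbf{Hypothesis}~\ref{hypotheses}(i) with Jensen's inequality and the mass estimate (\ref{eq:Greenetimation0}), producing a bound of the form $C\int_0^t(1+\mathbf{E}[\,\|u(s,\cdot)\|_p^p\,])\,ds$. The essential term is the stochastic convolution against $\tilde{N}^K$. Here the crucial point is that, because $p>\alpha$, the truncated L\'{e}vy measure has a finite $p$-th moment, $M_{K,p}:=\int_{\mathbb{R}\setminus\{0\}}|z|^p\nu_\alpha^K(dz)<\infty$, the small-jump singularity $|z|^{p-\alpha-1}$ being integrable near the origin exactly when $p>\alpha$. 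Using a Bichteler--Jacod / Kunita-type moment inequality for stochastic integrals against compensated Poisson measures valid for $1<p\le 2$, together with the linear growth of $\varphi$ and the kernel estimate (\ref{eq:Greenetimation2}), I would bound the $p$-th moment of this term by $CM_{K,p}\int_0^t(t-s)^{-(p-1)/2}(1+\mathbf{E}[\,\|u(s,\cdot)\|_p^p\,])\,ds$. Since $(p-1)/2<1$ for $p\le 2$, the kernel is weakly singular but integrable, so $\mathcal{T}u\in\mathcal{B}_{p,T}$ and the moment bound (\ref{eq:truncated-moment}) is in place.

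To obtain existence and pathwise uniqueness I would apply the same estimates to the difference $\mathcal{T}u-\mathcal{T}w$, now invoking the Lipschitz bounds of \textbf{Hypothesis}~\ref{hypotheses}(i) so that the additive constants drop out, yielding
\[
\mathbf{E}[\,\|\mathcal{T}u(t,\cdot)-\mathcal{T}w(t,\cdot)\|_p^p\,]\le C\int_0^t\big(1+(t-s)^{-(p-1)/2}\big)\mathbf{E}[\,\|u(s,\cdot)-w(s,\cdot)\|_p^p\,]\,ds.
\]
Multiplying by $e^{-\lambda t}$ and using $\int_0^t\big(1+(t-s)^{-(p-1)/2}\big)e^{-\lambda(t-s)}\,ds\le C\lambda^{(p-1)/2-1}\to 0$ as $\lambda\to\infty$ shows that for $\lambda$ large enough $\mathcal{T}$ is a strict contraction on $(\mathcal{B}_{p,T},\|\cdot\|_\lambda)$. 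The Banach fixed point principle then produces a unique fixed point $u^K$, the desired strong $L^p([0,L])$-valued solution; the same contraction estimate applied to any two solutions together with Gronwall's lemma gives pathwise uniqueness, while membership in $\mathcal{B}_{p,T}$ yields (\ref{eq:truncated-moment}) after reverting the exponential weight.

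The main obstacle I anticipate is twofold. First, one must have at hand the correct moment inequality for the Poisson stochastic convolution in the regime $1<p\le 2$: since we are outside the Hilbert ($p=2$) case the $L^2$-isometry is unavailable, and one must rely on the jump analogue of the Burkholder--Davis--Gundy inequality, tracking how it interacts with the time singularity of (\ref{eq:Greenetimation2}) after integrating in $x$. Second, the c\`{a}dl\`{a}g property of the $L^p([0,L])$-valued fixed point must be checked rather than assumed: the deterministic terms are continuous in $t$ in $L^p$, whereas the stochastic convolution must be shown to admit an $L^p$-valued c\`{a}dl\`{a}g modification, for which I would argue via the martingale structure of the stochastic integral together with the regularizing effect of $G$. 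Once these two points are secured, the remaining steps are routine.
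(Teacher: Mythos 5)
Your proposal follows essentially the same route as the paper: a Banach fixed point argument for the mild-form operator on the space of adapted $L^p([0,L])$-valued c\`{a}dl\`{a}g processes, with the stochastic convolution controlled by a Bichteler--Jacod/Burkholder-type moment bound for $p\in(\alpha,2]$, the finiteness of $\int|z|^p\nu_\alpha^K(dz)$ (which is exactly (\ref{eq:jumpestimate})), and the kernel estimate (\ref{eq:Greenetimation2}). The only difference is cosmetic: you obtain the contraction via an exponentially weighted norm with $\lambda$ large, whereas the paper contracts on a sufficiently small interval $[0,t_1]$ and then iterates over successive subintervals; both devices are standard and interchangeable here.
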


\begin{proof}
Let $\mathcal{H}_{T}$ be the space of all
$L^p([0,L])$-valued and
$\mathcal{F}_t$-adapted
c\`{a}dl\`{a}g processes $h_t\equiv\{h(t,\cdot),t\in[0,T]\}$.
The norm
$||\cdot||_{\mathcal{H}_T}$ of the space
$\mathcal{H}_{T}$ is
defined by
\begin{equation}
\label{eq:normH}
||h||_{\mathcal{H}_T}:=
\left(\sup_{0\leq t\leq T}
\mathbf{E}[||h_t||_p^p]
\right)^{\frac{1}{p}},\,\,p\in(\alpha,2].
\end{equation}
Then under the above norm,
$\mathcal{H}_T$ is a Banach space; see, e.g., \cite[Page 236]{Bo:2006}.

Let
$(J^K)_{K\geq0}$
be the operators defined by
\begin{equation*}
J^0(h)(t,x):=\int_0^LG_{t}(x,y)h_0(y)dy,
\end{equation*}
\begin{align*}
J^K(h)(t,x)&:=\int_0^t\int_0^L
G_{t-s}(x,y)f(s,y,h(s,y))dsdy
\\
&\quad+\int_0^{t+}\int_0^L
\int_{\mathbb{R}\setminus\{0\}}G_{t-s}(x,y)\varphi
(s-,y,h(s-,y))z\tilde{N}^K(ds,dy,dz),\,\, K\geq1,
\end{align*}
for all $h\in\mathcal{H}_T$, 
where $(t,x)\in[0,T]\times {[0,L]}$
and $h_0$ satisfies that
$\mathbf{E}[||h_0||_p^p]<\infty$.

We first consider the case of $t\in[0,t_1]$,
where $0<t_1\leq T$ is sufficiently small.

\textbf{Step (i):} Prove that for each
$h\in{\mathcal{H}_{t_1}}$, $J^K(h)\in{\mathcal{H}_{t_1}}, K\geq0$.

By Jensen's inequality
and (\ref{eq:Greenetimation0}), we have
\begin{align*}
\mathbf{E}[|| J^0(h)(t,\cdot)||_p^p]
&=\mathbf{E}\left[\int_0^L
\Bigg |\int_0^LG_{t}
(x,y)h_0(y)dy\Bigg
|^pdx\right]\\
&\leq \mathbf{E}\left[\int_0^L\left(
\int_0^LG_{t}(x,y)dx\right)
|h_0(y)|^pdy\right]\\
&\leq C_T\mathbf{E}[||h_0||_p^p].
\end{align*}
Since  $\mathbf{E}[||h_0||_p^p]<\infty$,
then $J^0(h)\in{\mathcal{H}_{t_1}}$.

For $J^K(h),K\geq1$, the H\"{o}lder inequality and the Burkholder-Davis-Gundy
inequality imply that
\begin{align*}
&\mathbf{E}[||J^K(h)(t,\cdot)||_p^p]
\\
&\leq C_p\int_0^L\mathbf{E}\left[\left|
\int_0^{t}\int_0^L
G_{t-s}(x,y)f(s,y,h(s,y))dsdy
\right|^p\right]dx
\\
&\quad+C_p\int_0^L\mathbf{E}\left[\left|
\int_0^{t+}\int_0^L\int_{\mathbb{R}\setminus\{0\}}
G_{t-s}(x,y)\varphi(s-,y,h(s-,y))z
\tilde{N}^K(ds,dy,dz)\right|^p\right]dx
\\
&\leq C_{p,t_1}\int_0^L\mathbf{E}\left[
\int_0^{t}\int_0^L
|G_{t-s}(x,y)f(s,y,h(s,y))|^pdsdy\right]dx
\\
&\quad+C_p\int_0^L
\mathbf{E}\left[\left|\int_0^{t+}
\int_0^L\int_{\mathbb{R}\setminus\{0\}}
|G_{t-s}(x,y)\varphi(s-,y,h(s-,y))z
|^2N^K(ds,dy,dz)\right|^{\frac{p}{2}}
\right]dx
\\
&\leq C_{p,t_1}\int_0^L\mathbf{E}\left[
\int_0^{t}\int_0^L
|G_{t-s}(x,y)f(s,y,h(s,y))|^pdsdy\right]dx
\\
&\quad+C_p
\int_0^L\mathbf{E}\left[\int_0^{t+}
\int_0^L\int_{\mathbb{R}\setminus\{0\}}|G_{t-s}(x,y)\varphi(s-,y,
h(s,y))z|^pN^K(ds,dy,dz)\right]dx
\\
&=C_{p,t_1}\int_0^L\mathbf{E}\left[
\int_0^{t}\int_0^L
|G_{t-s}(x,y)f(s,y,h(s,y))|^pdsdy\right]dx
\\
&\quad+C_p\int_0^L\mathbf{E}\left[\int_0^{t}
\int_0^L\int_{\mathbb{R}\setminus\{0\}}|G_{t-s}(x,y)\varphi(s,y,
h(s,y))z|^pdsdy\nu_{\alpha}^K(dz)\right]dx
\end{align*}
where the third inequality follows from the
fact that
\begin{align}
\label{eq:element-inequ}
\left|\sum_{i=1}^ka_i^2\right|^{\frac{q}{2}}\leq
\sum_{i=1}^{k}|a_i|^q
\end{align}
for $a_i\in\mathbb{R}, k\geq1$,
and $q\in(0,2]$.

By (\ref{eq:trun-jumpsize}), it holds
for $p\in(\alpha,2]$ that
\begin{align}
\label{eq:jumpestimate}
\int_{\mathbb{R}\setminus\{0\}}\vert z\vert ^p\nu_{\alpha}^K
(dz)=c_{+}\int_0^Kz^{p-\alpha-1}dz+c_{-}\int_{-K}^0(-z)^{p-\alpha-1}dz
=\frac{K^{p-\alpha}}{p-\alpha}.
\end{align}
By 
 {\rm\textbf{Hypothesis}} \ref{hypotheses} (i)  and
(\ref{eq:Greenetimation2}), there exists a constant $C_{p,t_1,\alpha,K}$ such that
\begin{align*}
&\mathbf{E}[||J^K(h)(t,\cdot)||_p^p]
\\
&\leq C_{p,t_1,\alpha,K}
\int_0^L\mathbf{E}\left[
\int_0^{t}\int_0^L
|G_{t-s}(x,y)|^p(|f(s,y,h(s,y))|^p+
|\varphi(s,y,h(s,y))|^p)dsdy\right]dx
\\
&\leq C_{p,t_1,\alpha,K}\int_0^L\int_0^{t}\int_0^L
\mathbf{E}[1+|h(s,y)|^p]|G_{t-s}(x,y)|^pdydsdx
\\
&\leq C_{p,t_1,\alpha,K}\left(\int_0^{t}|t-s|^{-\frac{p-1}{2}}ds
+\int_0^{t}
\mathbf{E}[||h_s||_p^p]|t-s|^{-\frac{p-1}{2}}ds\right)
\\
&\leq C_{p,t_1,\alpha,K}\left(\int_0^{t_1}|t_1-s|^{-\frac{p-1}{2}}ds
+||h||_{\mathcal{H}_{t_1}}^p\int_0^{t_1}|t_1-s|^{-\frac{p-1}{2}}ds\right)
\\
&\leq C_{p,t_1,\alpha,K}(1+||h||_{\mathcal{H}_{t_1}}^p),
\end{align*}
where
\begin{align*}
\int_0^{t_1}|t_1-s|^{-\frac{p-1}{2}}ds=\dfrac{2t_1^{\frac{3-p}{2}}}{3-p}<\infty
\end{align*}
for $p\leq2$.
Therefore, $J^K(h)\in{\mathcal{H}_{t_1}}$ for any $K\geq1$.

Let
$(\mathcal{J}^K)_{K\geq1}$
be the operators defined by
\begin{equation*}
\mathcal{J}^K(h)(t,x):=J^0(h)(t,x)+J^K(h)
(t,x),\,\,K\geq1,
\end{equation*}
for all $h\in{\mathcal{H}_{t_1}}$,
where $(t,x)\in[0,T]\times {[0,L]}$.

\textbf{Step (ii):}
Prove that for each $K\geq1$ the operator
$\mathcal{J}^K$
is a contraction on ${\mathcal{H}_{t_1}}$, i.e., for any 
$h,\bar{h}\in{\mathcal{H}_{t_1}}$
with $h_0=\bar{h}_0$, there exists a constant
$\kappa\in(0,1)$ such that
\begin{align*}
||\mathcal{J}^K(h)-\mathcal{J}^K(\bar{h})||
_{{\mathcal{H}_{t_1}}}
\leq\kappa||h-\bar{h}||_{{\mathcal{H}_{t_1}}}.
\end{align*}

Clearly $\mathcal{J}^K:{\mathcal{H}_{t_1}}\rightarrow
{\mathcal{H}_{t_1}}$ for each $K\geq1$.
For any $h,\bar{h}\in{\mathcal{H}_{t_1}}$ with $h_0=\bar{h}_0$,
we have
\begin{align*}
\mathcal{J}^K(h)-\mathcal{J}^K(\bar{h})
=J^K(h)-J^K(\bar{h}),\,\,K\geq1.
\end{align*}

The H\"{o}lder inequality, 
the Burkholder-Davis-Gundy inequality, 
(\ref{eq:element-inequ})-(\ref{eq:jumpestimate}),
 {\rm\textbf{Hypothesis}} \ref{hypotheses} (i) and
(\ref{eq:Greenetimation2}) together 
%Lipschitz assumption on $\varphi$
imply that 
\begin{align*}
&||\mathcal{J}^K(h)-\mathcal{J}^K(\bar{h})||
_{\mathcal{H}_{t_1}}^p
\\
&\leq C_p\sup_{0\leq t\leq t_1}
\int_0^L\mathbf{E}\bigg[\bigg|
\int_0^{t}\int_0^L
G_{t-s}(x,y)[f(s,y,h(s,y))-f(s,y,\bar{h}(s,y))]dsdy\bigg|^p\bigg]dx
\\
&\quad+C_p\sup_{0\leq t\leq t_1}
\int_0^L\mathbf{E}\bigg[\bigg|
\int_0^{t+}\int_0^L\int_{\mathbb{R}\setminus\{0\}}
G_{t-s}(x,y)
\\
&\quad\quad\times[\varphi(s-,y,h(s-,y))-\varphi
(s-,y,\bar{h}(s-,y))]z\tilde{N}(ds,dy,dz)\bigg|^p\bigg]dx
\\
&\leq C_{p,t_1}\sup_{0\leq t\leq t_1}
\int_0^L\mathbf{E}\bigg[
\int_0^{t}\int_0^L
|G_{t-s}(x,y)[f(s,y,h(s,y))-f(s,y,\bar{h}(s,y))]|^pdsdy\bigg]dx
\\
&\quad+C_{p,t_1,\alpha,K}\sup_{0\leq t\leq t_1}
\int_0^L\mathbf{E}\left[\int_0^{t}
\int_0^L
|G_{t-s}(x,y)[\varphi(s,y,h(s,y))-
\varphi(s,y,\bar{h}(s,y))]|^pdyds\right]dx
\nonumber\\
&\leq C_{p,t_1,\alpha,K}\sup_{0\leq
t\leq t_1}\int_0^L\int_0^{t}
\int_0^L|G_{t-s}(x,y)|^p
\mathbf{E}[|h(s,y)-\bar{h}(s,y)|^p]
dydsdx
\\
&\leq C_{p,t_1,\alpha,K}
\sup_{0\leq t\leq t_1}
\int_0^{t}\int_0^L|{t-s}|^{-\frac{p-1}{2}}
\mathbf{E}[|h(s,y)-\bar{h}(s,y)|^p]
dyds
\\
&\leq C_{p,t_1,\alpha,K}
\left(\sup_{0\leq t\leq t_1}\int_0^{t}
|t-s|^{-\frac{p-1}{2}}ds\right)
||h-\bar{h}||_{\mathcal{H}_{t_1}}^p
\\
&=\dfrac{2C_{p,t_1,\alpha,K}t_1^{\frac{3-p}{2}}}{3-p}||h-\bar{h}||_{\mathcal{H}_{t_1}}^p.
\end{align*}
Since $t_1$ is sufficiently small
and $p\in(\alpha,2]$,
then for each fixed $K\geq1$ we can choose
\begin{equation*}
\kappa=\dfrac{2C_{p,t_1,\alpha,K}t_1^{\frac{3-p}{2}}}{3-p}\in(0,1).
\end{equation*}

Based on the results of 
\textbf{Step (i)} and \textbf{Step (ii)}, by using the
Banach fixed point principle
on the set $\{h\in{\mathcal{H}_{t_1}}:
h(0,x)=u_0(x),x\in[0,L]\}$ and the right continuity of sample paths, one can conclude that
for each $K\geq1$  the equation
(\ref{truncated-eq}) has a pathwise
unique strong $L^p([0,L])$-valued c\`{a}dl\`{a}g
solution $u^K_t\in{\mathcal{H}_{t_1}}$
up to time $t_1$ for $p\in(\alpha,2]$.
For $t>t_1$, one can start with
the time $t_1$
and find a sufficiently small
time interval $\Delta t_1$
so that $t_2=t_1+\Delta t_1$.
Then similar to the previous proof,
one can obtain the solution,
 still denoted by $u^K_t$, in
${\mathcal{H}_{t_2}}$ with $t\in[0,t_2]$.
Therefore, the proof of existence and pathwise uniqueness of solutions is completed via
the above successive procedure, and the moment estimate (\ref{eq:truncated-moment}) follows
from 
%the definition of $\vert \vert \cdot\vert \vert _{{\mathcal{H}_{T}}}$
 (\ref{eq:normH}).
\qed
\end{proof}

\begin{remark}
\label{re:well-defined}
By {\rm\textbf{Hypothesis}} {\rm\ref{hypotheses}} {\rm(i)}
and estimate (\ref{eq:truncated-moment}), 
the (stochastic) integrals 
on the right-hand side of (\ref{eq:mild-truncted})
are well defined.
\end{remark}

From Proposition \ref{prop:existence}, we know that, for each $K\geq1$, there exists a pathwise unique strong $L^p([0,L])$-valued  c\`{a}dl\`{a}g
solution $u^K_t\equiv\{u^K(t,\cdot),t\in[0,T]\}$ to equation (\ref{truncated-eq}) for $p\in(\alpha,2]$.
To find an $L^p([0,L])$-valued c\`{a}dl\`{a}g
solution of equation (\ref{eq:originalequation1}) using
solutions $(u^K_t)_{K\geq1}$, we first prove the consistency of the solutions
$(u^K_t)_{K\geq1}$ in the following lemma.

\begin{lemma}
\label{lem:consistency}
Under the assumptions in Proposition \ref{prop:existence}, for any $1\leq K\leq M$ we have
\begin{align*}
u^K_t=u^M_t,\,
\mathbf{P}\text{-a.s.\,on}\,\, t\in[0,R_K).
\end{align*}
\end{lemma}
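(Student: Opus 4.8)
The plan is to combine a localization argument with the pathwise uniqueness already established in Proposition \ref{prop:existence}. The crucial observation is that $L\geq K$ forces $R_K\leq R_L$, hence $\Gamma_{K,T}\subseteq\Gamma_{L,T}$, so applying (\ref{eq:noise-equ}) at both truncation levels gives $L_{\alpha}^K(t,dx)(\omega)=L_{\alpha}(t,dx)(\omega)=L_{\alpha}^L(t,dx)(\omega)$ for every $(t,\omega)\in[0,T]\times\Gamma_{K,T}$. In other words, on the event $\{R_K>T\}=\Gamma_{K,T}$ the two driving noises, and hence the corresponding compensated measures $\tilde{N}^K$ and $\tilde{N}^L$, coincide path-by-path on $[0,T]$. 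Since $u^K$ and $u^L$ start from the same $u_0$ and are built from the same coefficients $f$ and $\varphi$, on $\Gamma_{K,T}$ they satisfy one and the same mild equation of the form (\ref{eq:mild-truncted}); the task is then to upgrade ``same equation on an event'' to ``same solution on that event,'' and finally to pass from the fixed-horizon events $\{R_K>T\}$ to the random interval $[0,R_K)$.

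First I would fix $T$ and show $u^K(t,\cdot)=u^L(t,\cdot)$ for all $t\in[0,T]$ on $\Gamma_{K,T}$. Introduce the localized difference $D(t,\cdot):=\big(u^K(t,\cdot)-u^L(t,\cdot)\big)1_{\{R_K>t\}}$; the integrand $1_{\{R_K>t\}}$ is left-continuous and $\mathcal{F}_t$-adapted, hence predictable, so it can be carried inside the stochastic integrals, and on $\{R_K>t\}\supseteq\Gamma_{K,T}$ the $\tilde{N}^K$- and $\tilde{N}^L$-integrals agree. Subtracting the two mild equations and estimating $\sup_{0\leq t\leq t_1}\mathbf{E}[||D(t,\cdot)||_p^p]$ by exactly the H\"older--Burkholder--Davis--Gundy computation of Step (ii) in Proposition \ref{prop:existence} --- using (\ref{eq:element-inequ}), (\ref{eq:jumpestimate}), the Lipschitz bound in {\rm\textbf{Hypothesis}} \ref{hypotheses}(i) and the kernel estimate (\ref{eq:Greenetimation2}) --- yields a bound of the form $\sup_{0\leq t\leq t_1}\mathbf{E}[||D(t,\cdot)||_p^p]\leq C_{p,t_1,\alpha,L}\,t_1^{\frac{3-p}{2}}\sup_{0\leq t\leq t_1}\mathbf{E}[||D(t,\cdot)||_p^p]$. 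Choosing $t_1$ small makes the constant strictly less than $1$, which forces $D\equiv0$ on $[0,t_1]$; iterating over finitely many subintervals covers $[0,T]$, so $u^K=u^L$ on $\Gamma_{K,T}$ up to a null set.

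It then remains to replace the fixed-horizon event by the interval $[0,R_K)$. For a fixed $\omega$ with $t<R_K(\omega)$ and $t<T$, choosing a rational $T'$ with $t<T'<R_K(\omega)$ and $T'\leq T$ places $\omega$ in $\Gamma_{K,T'}=\{R_K>T'\}$, and the step above applied with horizon $T'$ gives $u^K(t,\omega)=u^L(t,\omega)$ (the boundary case $t=T<R_K(\omega)$ is covered directly by $\Gamma_{K,T}$). Taking the union over the countably many rational $T'$ and discarding the corresponding countable family of null sets yields $u^K(t,\cdot)=u^L(t,\cdot)$ $\mathbf{P}$-a.s.\ for all $t\in[0,R_K)$, which is the assertion.

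The main obstacle is the rigor of the localization in the second paragraph: one must justify that inserting the adapted indicator $1_{\{R_K>t\}}$ (equivalently, stopping the equation at $R_K$) genuinely makes the two compensated integrals with respect to $\tilde{N}^K$ and $\tilde{N}^L$ coincide on the stopped interval while preserving the martingale and Green-kernel estimates. The point to watch is that $\tilde{N}^K$ and $\tilde{N}^L$ differ both in their atoms (jumps of absolute size in $(K,L]$) and in their compensators $\nu_{\alpha}^K$ versus $\nu_{\alpha}^L$; before the first jump exceeding $K$ the atomic parts agree, and relation (\ref{eq:noise-equ}) is precisely what guarantees that the remaining contributions also match on $\Gamma_{K,T}$, so that the fixed-point estimate of Proposition \ref{prop:existence} can be run verbatim for the difference $D$.
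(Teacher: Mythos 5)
Your proposal is correct and follows essentially the same route as the paper: localize the difference with the indicator $1_{\{t<R_K\}}$, use (\ref{eq:noise-equ}) to identify the driving noises before $R_K$, subtract the mild forms, and run the H\"older/Burkholder--Davis--Gundy/(\ref{eq:Greenetimation2}) estimate on $\mathbf{E}[\|(u^K-u^L)(t,\cdot)1_{\{t<R_K\}}\|_p^p]$. The only (immaterial) difference is at the final step, where the paper closes the integral inequality with the generalized Gronwall lemma of \cite{Lin:2013} rather than your small-interval contraction plus iteration.
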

\begin{proof}
It suffices to consider the case of $R_K<T$.
By (\ref{eq:mild-truncted}),
\begin{align*}
&(u^K(t,x)-u^M(t,x))1_{\{t< R_{K}\}}
\\
&=1_{\{t< R_{K}\}}\int_0^t\int_0^LG_{t-s}(x,y)[f(s,y,u^K(s,y))-f(s,y,u^M(s,y))]1_{\{s<R_{K}\}}dsdy
\\
&\quad+1_{\{t< R_{K}\}}\int_0^{t+}\int_0^L
\int_{\mathbb{R}\setminus\{0\}}G_{t-s}(x,y)
\\
&\quad\quad\quad\quad\quad\quad\quad\quad\quad
\times[\varphi(s-,y,u^K(s-,y))-\varphi(s-,y,u^M(s-,y))]z1_{\{s< R_{K}\}}\tilde{N}^K(ds,dy,dz).
\end{align*}
Let us define
$$U_t:=\mathbf{E}\left[||(u^K_t-u^M_t)1_{\{t< R_{K}\}}||_p^p\right],\,\,t\in[0,T].$$
Then
%\begin{align*}
%U(t)&\leq C_p\int_0^L\mathbf{E}\Bigg[\Bigg|\int_0^t\int_0^LG_{t-s}(x,y)
%\\
%&\quad\quad\quad\quad\quad\quad\times[f(s,y,u^K(s,y))-f(s,y,u^L(s,y))]1_{\{s<R_{K}\}}dsdy
%\Bigg|^p\Bigg]dx
%\\
%&\quad+C_p\int_0^L\mathbf{E}\Bigg[\Bigg|\int_0^{t+}\int_0^L
%\int_{\mathbb{R}\setminus\{0\}}G_{t-s}(x,y)
%\\
%&\quad\quad\quad
%\times[\varphi(s-,y,u^K(s-,y))-\varphi(s-,y,u^L(s-,y))]z1_{\{s< R_{K}\}}\tilde{N}^K(ds,dy,dz)\Bigg|^p\Bigg]dx
%\end{align*}
by the H\"{o}lder inequality, the Burkholder-Davis-Gundy inequality, (\ref{eq:element-inequ})-(\ref{eq:jumpestimate})
and  {\rm\textbf{Hypothesis}} \ref{hypotheses} (i),
%Lipschitz assumption on $\varphi$
we have
\begin{align*}
U(t)&\leq C_{p,T,\alpha,K}\int_0^L\int_0^{t}
\int_0^L|G_{t-s}(x,y)|^p
\mathbf{E}[|(u^K(s,y)-u^M(s,y))1_{\{s< R_{K}\}}|^p]
dydsdx
\\
&\leq C_{p,T,\alpha,K}\int_0^{t}|t-s|^{\frac{p-1}{2}}
\mathbf{E}[||(u^K_s-u^M_s)1_{\{s< R_{K}\}}||_p^p]ds
\\
&=C_{p,T,\alpha,K}\int_0^{t}|t-s|^{\frac{p-1}{2}}U(s)ds.
\end{align*}
Therefore, the generalized Gronwall Lemma
(see, e.g., Lin \cite[Theorem 1.2]{Lin:2013})
implies that $U(t)\equiv0$,
 which completes the proof.
 \qed
\end{proof}

Similar to \cite[Proposition 29]{Balan:2014}, for any $t\in[0,T]$ and positive constants
$K,M$, let us define
$$\Omega_t:=\bigcap_{K\leq M}\left\{t< R_K,
u^K_t=u^M_t\right\}$$
and
$$\Omega_t^{*}:=\Omega_t\cap\left\{\lim_{K\rightarrow\infty}R_K=\infty\right\}.$$
By Lemmas \ref{lem:stopping}
and \ref{lem:consistency}, one can conclude that
$\mathbf{P}[\Omega_t^{*}]=1$.

We next show that there exists a unique solution
to equation (\ref{eq:originalequation1}) as the
limit of solutions $(u^K_t)_{K\geq1}$ of
equation (\ref{truncated-eq}).
\\

\begin{Tproof}~{\textbf{of Theorem \ref{th:existence}.}}
We first prove that the process 
$u_t\equiv\{u(t,\cdot),t\in[0,T]\}$ defined by
\begin{equation}
\label{eq:solution}
u(t,\cdot,\omega):=
u^K(t,\cdot,\omega), \,\,
\omega\in\Omega_t^{*},t< R_K,
\end{equation}
is a strong $L^p([0,L])$-valued solution
of equation (\ref{eq:originalequation1}) for
 $p\in(\alpha,2]$.
 By Lemma \ref{lem:consistency},  (\ref{eq:solution})
 is well-defined.
From Proposition \ref{prop:existence}, we know that for a given
 $K\geq1$ there exists a unique
$L^p([0,L])$-valued process $u^K_t$ satisfies
equation (\ref{eq:mild-truncted}) for
 $p\in(\alpha,2]$.
Therefore, for each $K\geq1$, it holds by (\ref{eq:solution}) that
\begin{align*}
u(t,x)1_{\{t< R_K\}}&=u^K(t,x)1_{\{t< R_K\}}
\\
&=1_{\{t< R_K\}}\int_0^LG_t(x,y)u_0(y)dy
\\
&\quad+1_{\{t< R_K\}}\int_0^t\int_0^LG_{t-s}(x,y)f(s,y,u^K(s,y))dsdy
\nonumber\\
&\quad+1_{\{t< R_K\}}\int_0^{t+}\int_0^L
\int_{\mathbb{R}\setminus\{0\}}G_{t-s}(x,y)
\varphi(s-,y,u^K(s-,y))z\tilde{N}^K(ds,dy,dz)
\end{align*}
for all $t\in [0, T]$ and  for a.e. $x\in [0, L]$.

By (\ref{eq:Gamma})-(\ref{eq:noise-equ}) and
 \cite[Remark 19.1]{Peszat:2006} or \cite[Proposition C1]{Balan:2014}, we have
\begin{align*}
u(t,x)1_{\{t< R_K\}}&=1_{\{t< R_K\}}\int_0^LG_t(x,y)u_0(y)dy
\\
&\quad+1_{\{t< R_K\}}\int_0^t\int_0^LG_{t-s}(x,y)f(s,y,u^K(s,y))1_{\{s< R_K\}}dsdy
\nonumber\\
&\quad+1_{\{t< R_K\}}\int_0^{t+}\int_0^L
\int_{\mathbb{R}\setminus\{0\}}G_{t-s}(x,y)
\varphi(s-,y,u^K(s-,y))z1_{\{s< R_K\}}\tilde{N}(ds,dy,dz)
\\
%&=1_{\{t< R_K\}}\int_0^LG_t(x,y)u_0(y)dy
%\\
%&\quad+1_{\{t< R_K\}}\int_0^t\int_0^LG_{t-s}(x,y)f(s,y,u^K(s,y))1_{\{s< R_K\}}dsdy
%\nonumber\\
%&\quad+1_{\{t< R_K\}}\int_0^{t+}\int_0^L
%\int_{\mathbb{R}\setminus\{0\}}G_{t-s}(x,y)
%\varphi(s-,y,u^K(s-,y))z1_{\{s< R_K\}}\tilde{N}(ds,dy,dz)
%\\
%&=1_{\{t< R_K\}}\int_0^LG_t(x,y)u_0(y)dy
%\\
%&\quad+1_{\{t< R_K\}}\int_0^t\int_0^LG_{t-s}(x,y)f(s,y,u(s,y))1_{\{s< R_K\}}dsdy
%\nonumber\\
%&\quad+1_{\{t< R_K\}}\int_0^{t+}\int_0^L
%\int_{\mathbb{R}\setminus\{0\}}G_{t-s}(x,y)
%\varphi(s-,y,u(s-,y))z1_{\{s< R_K\}}\tilde{N}(ds,dy,dz)
%\\
&=1_{\{t< R_K\}}\int_0^LG_t(x,y)u_0(y)dy
\\
&\quad+1_{\{t< R_K\}}\int_0^t\int_0^LG_{t-s}(x,y)f(s,y,u(s,y))dsdy
\nonumber\\
&\quad+1_{\{t< R_K\}}\int_0^{t+}\int_0^L
\int_{\mathbb{R}\setminus\{0\}}G_{t-s}(x,y)
\varphi(s-,y,u(s-,y))z\tilde{N}(ds,dy,dz)
\end{align*}
for all $t\in [0,T]$ and  for a.e.
 $x\in [0, L]$,
 and the desired existence of solution follows from (\ref{eq:stop-infinity})  by letting $K\uparrow\infty$.

For the pathwise uniqueness of solutions to
equation (\ref{eq:originalequation1}), without
loss of generality, for each fixed $K\geq1$ we consider the case of
$t< R_K< T$. Suppose that $u_t\equiv\{u(t,\cdot),t\in[0,T]\}$ and $\bar{u}_t\equiv\{\bar{u}(t,\cdot),t\in[0,T]\}$ are two solutions
of equation (\ref{eq:originalequation1})
with initial functions $u_0$ and $\bar{u}_0$, respectively. 
If $u_0(x)=\bar{u}_0(x)$ 
 for a.e. $x\in[0,L]$, 
by (\ref{eq:mildform}) and 
(\ref{eq:Gamma})-(\ref{eq:noise-equ})
 one can show that 
\begin{align*}
&1_{\{t< R_K\}}(u(t,x)-\bar{u}(t,x))
\\
%&=1_{\{t< R_K\}}\int_0^{t}\int_0^L
%G_{t-s}(x,y)[f(s,y,u(s,y))-f(s,y,\bar{u}(s,y))]dsdy
%\nonumber\\
%&\quad+1_{\{t< R_K\}}\int_0^{t+}\int_0^L
%\int_{\mathbb{R}\setminus\{0\}}G_{t-s}(x,y)
%[\\bar{u}arphi(s-,y,u(s-,y))-\varphi(s-,y,v(s-,y))]z\tilde{N}(ds,dy,dz)
%\\
&=1_{\{t< R_K\}}\int_0^{t}\int_0^L
G_{t-s}(x,y)[f(s,y,u(s,y))-f(s,y,\bar{u}(s,y))]1_{\{s< R_K\}}dsdy
\nonumber\\
&\quad+1_{\{t< R_K\}}\int_0^{t+}\int_0^L
\int_{\mathbb{R}\setminus\{0\}}G_{t-s}(x,y)
\\
&\quad\quad\quad\quad\quad\quad\quad
\quad\quad\times
[\varphi(s-,y,u(s-,y))-\varphi(s-,y,\bar{u}(s-,y))]1_{\{s< R_K\}}z\tilde{N}^K(ds,dy,dz).
\end{align*}
%where the last equality follows from the definition of stopping times $(R_K)_{K\geq1}$.

By the Burkholder-Davis-Gundy inequality,
(\ref{eq:element-inequ})-(\ref{eq:jumpestimate}),
 {\rm\textbf{Hypothesis}} \ref{hypotheses} (i)
and (\ref{eq:Greenetimation2}), we have 
\begin{align*}
&\mathbf{E}\left[||(u_t-\bar{u}_t)
1_{\{t< R_K\}}||_p^p\right]
\\
%&\leq C_p\mathbf{E}\left[\int_0^L\left|\int_0^{t}\int_0^L
%G_{t-s}(x,y)[f(s,y,u(s,y))-f(s,y,\bar{u}(s,y))]1_{\{s< R_K\}}dsdy\right|^pdx\right]
%\\
%&\quad+C_p\mathbf{E}\Bigg[\int_0^L\Bigg|\int_0^{t+}\int_0^L
%\int_{\mathbb{R}\setminus\{0\}}G_{t-s}(x,y)
%\\
%&\quad\quad\quad\quad\quad
%\quad\quad\times[\varphi(s-,y,u(s-,y))-\varphi(s-,y,\bar{u}(s-,y))]1_{\{s<R_K\}}z\tilde{N}^K(ds,dy,dz)\Bigg|^pdx\Bigg]
%\\
&\leq C_{p} \mathbf{E}\left[\int_0^L
\int_0^{t}\int_0^L
|G_{t-s}(x,y)|^p|u(s,y)-\bar{u}(s,y)|^p1_{\{s< R_K\}}dsdydx\right]
\\
&\quad+ C_{p,\alpha,K} \mathbf{E}\left[\int_0^L
\int_0^{t}\int_0^L
|G_{t-s}(x,y)|^p|u(s,y)-\bar{u}(s,y)|^p1_{\{s<R_K\}}dsdydx\right]
\\
&\leq C_{p,\alpha,K,T} \int_0^t|t-s|^{-\frac{p-1}{2}}\mathbf{E}\left[||(u_s-\bar{u}_s)
1_{\{s< R_K\}}||_p^p\right]ds.
\end{align*}
The generalized Gronwall Lemma (see, e.g., \cite[Theorem 1.2]{Lin:2013}) implies that 
\begin{align*}
\mathbf{E}\left[||(u_t-\bar{u}_t)
1_{\{t< R_K\}}||_p^p\right]\equiv0.
\end{align*}
%By H\"{o}lder's inequality, one can obtain the
%above conclusion for all $p\in(0,2]$.

Therefore, by the fact that the sample paths (on variable $t$) of $u_t,\bar{u}_t$ are right continuous, the pathwise uniqueness holds for all $t\in[0,R_K)$. Combining Lemmas \ref{lem:stopping} and \ref{lem:consistency}
the desired result follows by letting $K\uparrow\infty$.
Recalling (\ref{eq:solution}), it is easy to see that the moment estimate (\ref{eq:moment}) follows from (\ref{eq:truncated-moment}), which completes the proof.
\qed
\end{Tproof}

\begin{remark}
By Remark \ref{re:well-defined},  (\ref{eq:solution}) and 
Lemma \ref{lem:stopping},   
the (stochastic) integrals 
on the right-hand side of (\ref{eq:mildform})
are well defined.
\end{remark}

\section{Proof of Theorem \ref{th:comparison}}
\label{sec4}

Recalling (\ref{eq:solution}) and the proof of Theorem \ref{th:existence},
we know that the solution $u_t$ of equation (\ref{eq:originalequation1}) can be regarded as the limit of solutions $(u^K_t)_{K\geq1}$ to
equation (\ref{truncated-eq}). Therefore, to prove the
comparison theorem on
solutions of equation (\ref{eq:originalequation1}),
it suffices to prove the comparison theorem
on solutions $(u^K_t)_{K\geq1}$ of equation (\ref{truncated-eq}).
To this end, for any given $K\geq1$ and $T>0$,
similar to equation (\ref{truncated-eq}), we
consider the stochastic heat equation
\begin{align}
\label{truncated-eq2}
\left\{\begin{array}{lcl}
\dfrac{\partial v^K(t,x)}{\partial t}=\dfrac{1}{2}
\dfrac{\partial^2v^K(t,x)}{\partial x^2}
+g(t,x,v^K(t,x))
\\[0.3cm]
\quad\quad\quad\quad\quad\,
+\varphi(t-,x,v^K(t-,x))\dot{L}^K_{\alpha}(t,x),
&& (t,x)\in (0,T]
\times(0,L),\\[0.3cm]
v^K(0,x)=v_0(x),&&x\in[0,L],\\[0.3cm]
v^K(t,0)=v^K(t,L)=0,&& t\in[0,T].
\end{array}\right.
\end{align}
where the initial function $v_0$
satisfies $\mathbf{E}[||v_0||_H^2]<\infty$, and
the drift
coefficient $g:[0,T]\times[0,L]\times\mathbb{R}\rightarrow\mathbb{R}$
and
noise coefficient
$\varphi:[0,T]\times[0,L]\times\mathbb{R}\rightarrow\mathbb{R}$ also
satisfy  {\rm\textbf{Hypothesis}} \ref{hypotheses}.

Proposition \ref{prop:existence} implies that 
there exists a pathwise unique strong
$H$-valued c\`{a}dl\`{a}g solution
$v^K_t\equiv\{v^K(t,\cdot),t\in[0,T]\}$
to equation (\ref{truncated-eq2})
and
\begin{equation*}
\label{eq:truncated-moment2}
\sup_{0\leq t\leq T}\mathbf{E}\left[||v^K_t||_H^2\right]<\infty.
\end{equation*}

Given $K\geq1$,
we now present the comparison theorem on $H$-valued c\`{a}dl\`{a}g
solutions $u_t^K,v_t^K$ to equations
(\ref{truncated-eq}) and (\ref{truncated-eq2})  in the following proposition.

\begin{proposition}
\label{prop:comparison}
Suppose that {\rm\textbf{Hypothesis}} {\rm\ref{hypotheses}} holds
and  $c_{-}=0$ in (\ref{eq:jumpsizemeasure}).    
Given $K\geq1$, let
$u_t^K\equiv\{u^K(t,\cdot),t\in[0,T]\}$ and
$v_t^K\equiv\{v_t^K(t,\cdot),t\in[0,T]\}$ are the strong $H$-valued c\`{a}dl\`{a}g
solutions
to equations (\ref{truncated-eq})
and (\ref{truncated-eq2}), respectively.
If
%\begin{align*}
$u_0(x)\leq v_0(x)\,\,\text{for a.e.}\,\,x\in[0,L]$
%\end{align*}
and
%\begin{align*}
$f(t,x,y)\leq g(t,x,y)\,\,\text{for all}\,\,(t,x,y)\in[0,T]\times[0,L]\times\mathbb{R}$,
%\end{align*}
we have
\begin{align*}
\mathbf{P}\left[u_t^K\leq v_t^K\,\,\,\text{for all}\,\,t\in[0,T]\right]=1.
\end{align*}
\end{proposition}
\begin{proof}
For any $\epsilon>0$ and $x,y\in[0,L]$, we know that $G_{\epsilon}(x,y)$ 
is the fundamental solution of the 
heat equation (\ref{eq:fund-solu}) satisfying (\ref{eq:convolution}).
Since $u_t^K,v_t^K$ are two strong $H$-valued c\`{a}dl\`{a}g solutions of equations (\ref{truncated-eq})
and (\ref{truncated-eq2}), respectively, it holds by setting 
$G_{\epsilon}^x=G_{\epsilon}(x,\cdot)$ that
\begin{align}
\label{eq:eps-u}
\langle u^K_t,G_{\epsilon}^x\rangle
&=\langle u_0,G_{\epsilon}^x\rangle
+\dfrac{1}{2}\int_0^t \langle u^K_s,\dfrac{\partial^2}{\partial y^2}G_{\epsilon}^x\rangle ds
+\int_0^t \langle f(s,\cdot,u^K_s),G_{\epsilon}^x\rangle ds
\nonumber\\
&\quad+\int_0^{t+}\int_0^L
\int_{\mathbb{R}\setminus\{0\}}
\varphi(s-,y,u^K(s-,y))G_{\epsilon}(x,y)z\tilde{N}^K(ds,dy,dz)
\nonumber\\
&=\langle u_0,G_{\epsilon}^x\rangle
+\dfrac{1}{2}\int_0^t \langle u^K_s,\dfrac{\partial^2}{\partial x^2}G_{\epsilon}^x\rangle ds
+\int_0^t \langle f(s,\cdot,u^K_s),G_{\epsilon}^x\rangle ds
\nonumber\\
&\quad+\int_0^{t+}\int_0^L
\int_{\mathbb{R}\setminus\{0\}}
\varphi(s-,y,u^K(s-,y))G_{\epsilon}(x,y)z\tilde{N}^K(ds,dy,dz)
\nonumber\\
&=\langle u_0,G_{\epsilon}^x\rangle
+\dfrac{1}{2}
\int_0^t\dfrac{\partial^2}{\partial x^2} 
\langle u^K_s,G_{\epsilon}^x\rangle ds
+\int_0^t \langle f(s,\cdot,u^K_s),G_{\epsilon}^x\rangle ds
\nonumber\\
&\quad+\int_0^{t+}\int_0^L
\int_{\mathbb{R}\setminus\{0\}}
\varphi(s-,y,u^K(s-,y))G_{\epsilon}(x,y)z\tilde{N}^K(ds,dy,dz),
\end{align}
where the second equality follows from the symmetry on the variables
$x,y$ of the function $G_{\epsilon}(x,y)$. Similarly, 
\begin{align}
\label{eq:eps-v}
\langle v^K_t,G_{\epsilon}^x\rangle
&=\langle v_0,G_{\epsilon}^x\rangle
+\dfrac{1}{2}\int_0^t\dfrac{\partial^2}{\partial x^2}
\langle v^K_s,G_{\epsilon}^x\rangle ds+\int_0^t 
\langle g(s,\cdot,v^K_s),G_{\epsilon}^x\rangle
ds
\nonumber\\
&\quad+\int_0^{t+}\int_0^L
\int_{\mathbb{R}\setminus\{0\}}
\varphi(s-,y,v^K(s-,y))G_{\epsilon}(x,y)z\tilde{N}^K(ds,dy,dz).
\end{align}

Setting
$w_{t}^K=u^K_t-v^K_t$
and 
$w_{t}^{K,\epsilon}(x)=\langle w_t^K,G_{\epsilon}^x\rangle$, then it follows
from (\ref{eq:eps-u})-(\ref{eq:eps-v}) that
\begin{align}
\label{eq:semi-mart}
w_{t}^{K,\epsilon}(x)&=w_{0}^{K,\epsilon}(x)
+\dfrac{1}{2}\int_0^t\dfrac{\partial^2}{\partial x^2} w_{s}^{K,\epsilon}(x)ds
+\int_0^t[F_{s}^{K,\epsilon,1}(x)+F_{s}^{K,\epsilon,2}(x)]ds
\nonumber\\
&\quad+\int_0^{t+}\int_0^L
\int_{\mathbb{R}\setminus\{0\}}H_{s-}^{K,\epsilon}(x,y,z)\tilde{N}^K(ds,dy,dz),
\end{align}
where 
\begin{align*}
&F_{s}^{K,\epsilon,1}(x):=
\langle f(s,\cdot,u^K_s)-f(s,\cdot,v^K_s),G_{\epsilon}^x\rangle,
\\
&F_{s}^{K,\epsilon,2}(x):=
\langle f(s,\cdot,v^K_s)-g(s,\cdot,v^K_s),G_{\epsilon}^x\rangle,
\\
&H_{s}^{K,\epsilon}(x,y,z):=[\varphi(s,y,u^K(s,y))-\varphi(s,y,v^K(s,y))]G_{\epsilon}(x,y)z.
\end{align*}

For any $\eta>0$,
let
$\Psi_{\eta}:\mathbb{R}\rightarrow\mathbb{R}$
and $\psi_{\eta}:\mathbb{R}\rightarrow\mathbb{R}$ be
two functions defined by
\begin{equation*}
\Psi_{\eta}(x):=
\left\{\begin{array}{lcl}
0,&&x\leq0,
\\[0.4cm]
2{\eta}x,&&x\in[0,\frac{1}{{\eta}}],
\\[0.4cm]
2,&&x\geq\frac{1}{\eta},
\end{array}\right.
\end{equation*}
and
\begin{align*}
\psi_{\eta}(x):=1_{\{x\geq0\}}\int_0^xdy\int_0^ydz
\Psi_{\eta}(z),
\end{align*}
respectively. Then it follows that
\begin{enumerate}[(a)]
\item \label{eq:eta1}$\psi_{\eta}$ is a $C^2$-function on
$\mathbb{R}$;
\item \label{eq:eta2}
$
0\leq\psi_{\eta}^{'}(x)\leq 2x^{+}=
2\max\{{x,0}\},\,\,0\leq\psi_{\eta}^{''}(x)\leq 2,\, \text{for all}\,\,
x\in\mathbb{R}$;

\item \label{eq:eta3}
$
\lim_{\eta\rightarrow\infty}\psi_{\eta}(x)
=(x^{+})^2,\,\,\lim_{\eta\rightarrow\infty}\psi_{\eta}^{'}(x)=2x^{+},\,\,
0\leq\lim_{\eta\rightarrow\infty}\psi_{\eta}^{''}(x)\leq 2,\,\text{for all}\,
x\in\mathbb{R}.
$
\end{enumerate}

Since $w^{K,\epsilon}_t(x)$ given by (\ref{eq:semi-mart})
is a real valued semi-martingale
for each fixed $x\in[0,L]$, then by (\ref{eq:eta1}) in the above and It\^{o}'s formula (see, e.g., Applebaum \cite[Theorem 4.4.7]{Applebaum:2009}), 
\begin{align*}
\psi_{\eta}(w^{K,\epsilon}_t(x))&=\psi_{\eta}(w^{K,\epsilon}_0(x))
+\dfrac{1}{2}\int_0^t \psi_{\eta}^{'}(w^{K,\epsilon}_s(x))
\dfrac{\partial^2}{\partial x^2}w^{K,\epsilon}_s(x) ds
\\
&\quad+\int_0^t \psi_{\eta}^{'}(w^{K,\epsilon}_s(x))[F_{s}^{K,\epsilon,1}(x)+F_{s}^{K,\epsilon,2}(x)]ds
\\
&\quad+\int_0^{t+}\int_0^L
\int_{\mathbb{R}\setminus\{0\}}\{\psi_{\eta}(w^{K,\epsilon}_{s-}(x)
+H_{s-}^{K,\epsilon}(x,y,z))-\psi_{\eta}(w^{K,\epsilon}_{s-}(x))\}
\tilde{N}^K(ds,dy,dz)
\\
&\quad+\int_0^{t}\int_0^L
\int_{\mathbb{R}\setminus\{0\}}\{\psi_{\eta}(w^{K,\epsilon}_s(x)+H_{s}^{K,\epsilon}(x,y,z))-\psi_{\eta}(w^{K,\epsilon}_s(x))
\\
&\quad\quad\quad\quad\quad\quad\quad\quad
\quad\quad\quad\quad\quad\,-\psi_{\eta}^{'}(w^{K,\epsilon}_s(x))
H_{s}^{K,\epsilon}(x,y,z)\}dsdy\nu_{\alpha}^K(dz).
\end{align*}
By stochastic Fubini's theorem, we get
\begin{align}
\label{eq:estimate}
&\mathbf{E}\left[\int_0^L\psi_{\eta}(w^{K,\epsilon}_t(x))dx\right]
\nonumber\\
&=\mathbf{E}\left[\int_0^L\psi_{\eta}(w^{K,\epsilon}_0(x))dx\right]
+\dfrac{1}{2}\mathbf{E}\left[\int_0^t\int_0^L\psi_{\eta}^{'}(w^{K,\epsilon}_s(x))\dfrac{\partial^2}{\partial x^2}w_{s}^{K,\epsilon}(x)dxds\right]
\nonumber\\
&\quad+\mathbf{E}\left[\int_0^t\int_0^L\psi_{\eta}^{'}(w^{K,\epsilon}_s(x))[F_{s}^{K,\epsilon,1}(x)+F_{s}^{K,\epsilon,2}(x)]dxds\right]
\nonumber\\
&\quad+\mathbf{E}\Bigg[\int_0^{t}\int_0^L
\int_{\mathbb{R}\setminus\{0\}}\Bigg\{
\int_0^L[\psi_{\eta}(w^{K,\epsilon}_s(x)+H_{s}^{K,\epsilon}(x,y,z))-\psi_{\eta}(w^{K,\epsilon}_s(x))
\nonumber\\
&\quad\quad\quad\quad\quad\quad\quad\quad\quad\quad\quad\quad\,\,\,
-\psi_{\eta}^{'}(w^{K,\epsilon}_s(x))
H_{s}^{K,\epsilon}(x,y,z)]dy\Bigg\}dsdx\nu_{\alpha}^K(dz)\Bigg].
\end{align}

The assumption  $u_0(x)\leq v_0(x)$ 
 {for a.e.} $x\in[0,L]$
implies that
\begin{align}
\label{eq:estimate1}
\int_0^L\psi_{\eta}(w^{K,\epsilon}_0(x))dx=0.
\end{align}

Since $\psi_{\eta}^{'}(w^{K,\epsilon}_s(0))=\psi_{\eta}^{'}(w^{K,\epsilon}_s(L))=0$, it holds by the integration-by-parts formula 
and (\ref{eq:eta3}) that
\begin{align}
\label{eq:estimate2}
\int_0^L\psi_{\eta}^{'}(w^{K,\epsilon}_s(x))\dfrac{\partial^2}{\partial x^2}w_{s}^{K,\epsilon}(x)dx=-\int_0^L\psi_{\eta}^{''}(w^{K,\epsilon}_s(x))
\left|\dfrac{\partial}{\partial x}w_{s}^{K,\epsilon}(x)\right|^2dx\leq0.
\end{align}

The assumption on
$f(t,x,w)\leq g(t,x,w)\,\,\text{for all}\,\,(t,x,w)\in[0,T]\times[0,L]\times\mathbb{R}$ and (\ref{eq:eta2}) imply that
\begin{align}
\label{eq:estimate3}
\int_0^L\psi_{\eta}^{'}(w^{K,\epsilon}_s(x))F_{s}^{K,\epsilon,2}(x)dx\leq0.
\end{align}

%By Taylor's formula, it is easy to see that
%\begin{align}
%\label{eq:estimate4}
%&\psi_{\eta}(w^{K,\epsilon}_s(x)+H_{s}^{K,\epsilon}(x,y,z))-\psi_{\eta}(w^{K,\epsilon}_s(x))-\psi_{\eta}^{'}(w^{K,\epsilon}_s(x))
%H_{s}^{K,\epsilon}(x,y,z)
%\nonumber\\
%&\leq C|H_{s}^{K,\epsilon}(x,y,z)|^2.
%\end{align}
Therefore, by substituting (\ref{eq:estimate1})-(\ref{eq:estimate3}) into
(\ref{eq:estimate}), we obtain
\begin{align*}
&\mathbf{E}\left[\int_0^L\psi_{\eta}(w^{K,\epsilon}_t(x))dx\right]
\nonumber\\
&\leq\mathbf{E}\left[\int_0^t\int_0^L\psi_{\eta}^{'}(w^{K,\epsilon}_s(x))F_{s}^{K,\epsilon,1}(x)dxds\right]
\\
&\quad+\mathbf{E}\Bigg[\int_0^{t}\int_0^L
\int_{\mathbb{R}\setminus\{0\}}\Bigg\{
\int_0^L[\psi_{\eta}(w^{K,\epsilon}_s(x)+H_{s}^{K,\epsilon}(x,y,z))-\psi_{\eta}(w^{K,\epsilon}_s(x))
\nonumber\\
&\quad\quad\quad\quad\quad\quad\quad\quad\quad\quad\quad\quad\,\,\,
-\psi_{\eta}^{'}(w^{K,\epsilon}_s(x))
H_{s}^{K,\epsilon}(x,y,z)]dy\Bigg\}dsdx\nu_{\alpha}^K(dz)\Bigg].
\end{align*}

Letting $\eta\uparrow\infty$, it holds by (\ref{eq:eta3}) and
{\rm\textbf{Hypothesis}} \ref{hypotheses}
($f$ is Lipschitz continuous) that
\begin{align*}
&\mathbf{E}\left[\int_0^L|(w^{K,\epsilon}_t(x))^{+}|^2dx\right]
\\
&\leq C\mathbf{E}\left[\int_0^t\int_0^L(w_s^{K,\epsilon}(x))^{+}
|w_s^{K,\epsilon}(x)|dxds\right]
\\
&\quad+\mathbf{E}\Bigg[\int_0^{t}\int_0^L
\int_{\mathbb{R}\setminus\{0\}}\Bigg\{
\int_0^L
[|(w^{K,\epsilon}_s(x)+H_{s}^{K,\epsilon}(x,y,z))^{+}|^2-|(w^{K,\epsilon}_s(x))^{+}|^2
\nonumber\\
&\quad\quad\quad\quad\quad\quad\quad\quad\quad\quad\quad\quad\,\,\,
-2(w^{K,\epsilon}_s(x))^{+}
H_{s}^{K,\epsilon}(x,y,z)]dy\Bigg\}dsdx\nu_{\alpha}^K(dz)\Bigg].
\end{align*}
By {\rm\textbf{Hypothesis}} \ref{hypotheses}
($\varphi$ is non-decreasing) and assumption
$c_{-}=0$ in (\ref{eq:jumpsizemeasure}),
\begin{align}
\label{eq:crucial-equality}
&|(w^{K,\epsilon}_s(x)+H_{s}^{K,\epsilon}(x,y,z))^{+}|^2
-|(w^{K,\epsilon}_s(x))^{+}|^2
-2(w^{K,\epsilon}_s(x))^{+}
H_{s}^{K,\epsilon}(x,y,z)
\nonumber\\
&=1_{\{w_s^{K,\epsilon}(x)\geq0\}}|H_s^{K,\epsilon}(x,y,z)|^2.
\end{align}
(Note that the above equality
also holds under {\rm\textbf{Hypothesis}} \ref{hypotheses} 
for which $\varphi$ is non-increasing and assumption
$c_{+}=0$ in (\ref{eq:jumpsizemeasure}).)

Recalling (\ref{eq:convolution}) and
by letting $\epsilon\downarrow0$ and using Lebesgue's dominated 
convergence theorem, we obtain
\begin{align*}
\mathbf{E}\left[\int_0^L|(w^{K}_t(x))^{+}|^2dx\right]
&\leq C\mathbf{E}\left[\int_0^t\int_0^L(w_s^K(x))^{+}
|w_s^K(x)|dxds\right]
\\
&\quad+\mathbf{E}\Bigg[\int_0^{t}\int_0^L
\int_{\mathbb{R}\setminus\{0\}}1_{\{w^K_s(x)\geq0\}}|H_s^{K}(x,z)|^2
dsdx\nu_{\alpha}^K(dz)\Bigg].
\end{align*}
%\begin{align*}
%\mathbf{E}\left[\int_0^L\psi_{\eta}(w^{K}_t(x))dx\right]
%&\leq\mathbf{E}\left[\int_0^t\int_0^L\psi_{\eta}^{'}(w^{K}_s(x))
%F_{s}^{K}(x)dxds\right]
%\\
%&\quad+\mathbf{E}\Bigg[\int_0^{t}\int_0^L
%\int_{\mathbb{R}\setminus\{0\}}[\psi_{\eta}(w^{K}_s(x)+H_{s}^{K}(x,z))-\psi_{\eta}(w^{K}_s(x))
%\nonumber\\
%&\quad\quad\quad\quad\quad\quad\quad\quad\quad\quad\quad\quad\,\,\,
%-\psi_{\eta}^{'}(w^{K}_s(x))
%H_{s}^{K}(x,z)]dsdx\nu_{\alpha}^K(dz)\Bigg].
%\end{align*}
where
\begin{align*}
w_t^K(x)&=u^K(t,x)-v^K(t,x),
\\
F_{s}^{K}(x)&=f(s,x,u^K(s,x))-f(s,x,v^K(s,x)),
\\
H_{s}^{K}(x,z)&=[\varphi(s,x,u^K(s,x))-\varphi(s,x,v^K(s,x))]z.
\end{align*}
%Letting $\eta\uparrow\infty$, it holds by (\ref{eq:eta3}) and
%{\rm\textbf{Hypothesis}} \ref{hypotheses}
%($f$ is Lipschitz continuous) that
%\begin{align*}
%\mathbf{E}\left[\int_0^L|(w^{K}_t(x))^{+}|^2dx\right]
%&\leq C\mathbf{E}\left[\int_0^t\int_0^L(w_s^K(x))^{+}
%|w_s^K(x)|dxds\right]
%\\
%&\quad+\mathbf{E}\Bigg[\int_0^{t}\int_0^L
%\int_{\mathbb{R}\setminus\{0\}}
%[|(w^{K}_s(x)+H_{s}^{K}(x,z))^{+}|^2-|(w^{K}_s(x))^{+}|^2
%\nonumber\\
%&\quad\quad\quad\quad\quad\quad\quad\quad\quad\quad\quad\quad\,\,\,
%-2(w^{K}_s(x))^{+}
%H_{s}^{K}(x,z)]dsdx\nu_{\alpha}^K(dz)\Bigg].
%\end{align*}
%By {\rm\textbf{Hypothesis}} \ref{hypotheses}
%($\varphi$ is non-decreasing) and assumption
%$c_{-}=0$ in (\ref{eq:jumpsizemeasure}),
%\begin{align*}
%|(w^{K}_s(x)+H_{s}^{K}(x,z))^{+}|^2-|(w^{K}_s(x))^{+}|^2
%-2(w^{K}_s(x))^{+}
%H_{s}^{K}(x,z)=1_{\{w_s(x)\geq0\}}|H_s(x,z)|^2.
%\end{align*}
%(Note that the above equality
%also holds under {\rm\textbf{Hypothesis}} \ref{hypotheses} 
%for which $\varphi$ is non-increasing and assumption
%$c_{+}=0$ in (\ref{eq:jumpsizemeasure}).)
%Then 
%\begin{align*}
%\mathbf{E}\left[\int_0^L|(w^{K}_t(x))^{+}|^2dx\right]
%&\leq C\mathbf{E}\left[\int_0^t\int_0^L(w_s^K(x))^{+}
%|w_s^K(x)|dxds\right]
%\\
%&\quad+\mathbf{E}\Bigg[\int_0^{t}\int_0^L
%\int_{\mathbb{R}\setminus\{0\}}1_{\{w^K_s(x)\geq0\}}|H_s(x,z)|^2
%dsdx\nu_{\alpha}^K(dz)\Bigg].
%\end{align*}
By {\rm\textbf{Hypothesis}} \ref{hypotheses}
($\varphi$ is Lipschitz continuous) and  
(\ref{eq:jumpestimate}) with $p=2$, 
\begin{align*}
\mathbf{E}\left[\int_0^L|(w^{K}_t(x))^{+}|^2dx\right]
&\leq C\mathbf{E}\left[\int_0^t\int_0^L(w_s^K(x))^{+}
|w_s^K(x)|dxds\right]
\\
&\quad+C\left(\int_{\mathbb{R}\setminus\{0\}}z^2\nu_{\alpha}^K(dz)\right)
\mathbf{E}\Bigg[\int_0^{t}\int_0^L1_{\{w_s^K(x)\geq0\}}
|w_s^K(x)|^2dsdx\Bigg]
\\
&\leq C_{\alpha,K}\mathbf{E}\Bigg[\int_0^{t}\int_0^L
|(w_t^K(x))^{+}|^2dxds\Bigg],
\end{align*}
that is,
\begin{align*}
\mathbf{E}\left[||(w^{K}_t)^{+}||_{H}^2\right]\leq C_{\alpha,K}
\mathbf{E}\Bigg[\int_0^{t}||(w_s^K)^{+}||_H^2ds\Bigg].
\end{align*}
Applying the Gronwall lemma, one can conclude that
\begin{align*}
\sup_{0\leq t\leq T}
\mathbf{E}[||(w_t^K)^{+}||_H^2]=0.
\end{align*}
By the right continuity of sample paths (on variable $t$) of $u^K_t$ 
and $v^K_t$, we obtain
\begin{align*}
\mathbf{P}\left[w_t^K\leq 0\,\,\,\text{for all}\,\,t\in[0,T]\right]=1,
\end{align*}
that is,
\begin{align*}
\mathbf{P}\left[u^K_t\leq v^K_t\,\,\,\text{for all}\,\,t\in[0,T]\right]=1,
\end{align*}
which completes the proof.
\qed
\end{proof}

\begin{Tproof}~\textbf{of Theorem \ref{th:comparison}.}
Recalling Lemma \ref{lem:stopping} and
(\ref{eq:solution}), the proof is finished 
by Proposition \ref{prop:comparison} and 
by letting $K\uparrow\infty$.
\qed
\end{Tproof}

At the end of this section we outline another
approach based on infinite dimensional
stochastic differential equations theory
to prove Theorem \ref{th:comparison}. This kind of method
first appears in \cite{Donati:1993} in showing a comparison principle for stochastic heat equations driven by Gaussian space-time white noises.
The key of this approach is to project the truncated $\alpha$-stable white noise $\dot{L}_{\alpha}^K$ given in equation (\ref{truncated-eq})
onto a finite dimensional spatial space to construct a
sequence of evolution equations having semi-martingale form so that
the It\^{o} formula 
for infinite dimensional semi-martingales 
can be applied in showing the
comparison principle for the evolution equations. 

We now construct a sequence of evolution equations.
Let $\{h_n\}_{n\geq1}$ be an orthonormal basis
of the Hilbert space $H$ such that
$h_n\in L^{\infty}([0,L])$ for all $n\geq1$, that is,
\begin{equation}
\label{eq:h_n}
\sup_{x\in[0,L]}|h_n(x)|\leq C,\,\, \text{for all}\,\,n\geq1.
\end{equation}
Given $n,K\geq1$ and $T>0$, let $L_n^K\equiv\{L_n^K(t),t\in[0,T]\}$ be a real-valued  process defined by
\begin{align}
\label{eq:LnK}
L_n^K(t):=
%&\int_0^{t}\int_0^Lh_n(x)
%L_{\alpha}^K(ds,dx)
%\nonumber\\
\int_0^{t}\int_0^L\int_{\mathbb{R}\setminus\{0\}}h_n(x)z
\tilde{N}^K(ds,dx,dz),\,\, t\in[0,T].
\end{align}
Then $(L_n^K)_{n\geq1}$ is a family of mutually
uncorrelated real-valued truncated $\alpha$-stable
processes. 
Given $m,K\geq1$ and $T>0$, let $Z_m^K\equiv\{Z_m^K(t),t\in[0,T]\}$ be a $H$-valued truncated $\alpha$-stable process defined
by
\begin{align}
\label{eq:ZmK}
Z_m^K(t):=\sum_{n=1}^mL_n^K(t)h_n,\,\, t\in[0,T].
\end{align}
Then we consider a
evolution equation of the form
\begin{equation}
\label{eq:evolutionequation}
\left\{\begin{array}{lcl}
du_m^K(t)=Au_m^K(t)dt+f(t,\cdot,u_m^K(t))dt+
\Sigma(t-,\cdot,u_m^K(t-))dZ_m^K(t),&& t\in (0,T],
\\[0.3cm]
u_m^K(0,x)=u_0(x),&&x\in [0,L],
\\[0.3cm]
u_m^K(t,0)=u_m^K(t,L)=0,&& t\in[0,T],
\end{array}\right.
\end{equation}
where $u_m^K(t)\equiv \{u_m^K(t,\cdot),t\in[0,T]\}$,
$A=\frac{1}{2}\frac{\partial^2}{\partial x^2}$, 
and
$\Sigma$ is the operator of multiplication by
$\varphi$, that is, for any $t\in[0,T]$ and $w\in H$,
$\Sigma(t,\cdot,w):L^{\infty}([0,L])\rightarrow H$
is defined by
$
\Sigma(t,\cdot,w)(h):=\varphi(t,\cdot,w)h,
h\in L^{\infty}([0,L]).
$

By Gy\"{o}ngy 
\cite[Theorems 2.9-2.10]{Gyongy:1982}, 
if the initial function $u_0$ satisfies
$\mathbf{E}[||u_0||_H^2]<\infty$, then under 
 {\rm\textbf{Hypothesis}} {\rm\ref{hypotheses}} {\rm(i)}
there exists a pathwise unique strong
$H^1_0$-valued c\`{a}dl\`{a}g solution
$u_m^K(t)\equiv\{u_m^K(t,\cdot),t\in[0,T]\}$
to equation (\ref{eq:evolutionequation})
%Moreover, there exists a probability space $\hat{\Omega}\subset\Omega$ with $\mathbf{P}[\hat{\Omega}]=1$ and an
%$H$-valued process $\hat{u}_m^K$
%such that $\hat{u}_m^K=u_m^K$ for 
%$d\mathbf{P}\times dt$-almost surely 
%$(\omega,t)\in \hat{\Omega}\times[0,T]$ 
satisfying
\begin{align*}
%\label{eq:analy-solu}
u_m^K(t,\cdot)&=u_0(\cdot)+\int_0^tAu_m^K(s,\cdot)ds
+\int_0^tf(s,\cdot,u_m^K(s,\cdot))ds
+\int_0^{t+}\Sigma(s-,\cdot,u_m^K(s-,\cdot))dZ_m^K(s),
\end{align*}
where $H^1_0$ is the closure of $C^{\infty}([0,L])$ in $H^1$, and 
$H^1$ is the usual Sobolev space consists of all functions 
in $H$ for which the first order weak (generalized) 
derivative belongs to $H$.
%For each fixed $K\geq1$, to consider
%the comparison
%theorem on the $V$-valued c\`{a}dl\`{a}g solutions
% to equation (\ref{eq:evolutionequation}) with different initial functions and drift coefficients, similar to
% equation (\ref{eq:evolutionequation}),
% we consider the following
%evolution equation
%\begin{equation}
%\label{eq:evolutionequation2}
%\left\{\begin{array}{lcl}
%dv_m^K(t)=Av_m^K(t)dt+g(t,\cdot,v_m^K(t))dt+
%\Sigma(t-,\cdot,v_m^K(t-))dZ_m^K(t),&& t\in (0,T],
%\\[0.3cm]
%v_m^K(0,x)=v_0(x),&&x\in [0,L],
%\\[0.3cm]
%v_m^K(t,0)=v_m^K(t,L)=0,&& t\in[0,T],
%\end{array}\right.
%\end{equation}
%where the initial function $v_0$
%is different from $u_0$ in equation (\ref{eq:evolutionequation}) and
%satisfies $\mathbf{E}[||v_0(\cdot)||_H^2]<\infty$,
%the drift coefficient $g$ 
%satisfies {\rm\textbf{Hypothesis}} \ref{hypotheses},
%and the
%noise coefficient
%$\Sigma$ is the same as in equation (\ref{eq:evolutionequation}).

Replacing $u_0$ and  $f$ in equation (\ref{eq:evolutionequation}) 
by $v_0$ and $g$,
respectively, then  there exists a pathwise unique
strong $H^1_0$-valued c\`{a}dl\`{a}g solution
$v_m^K\equiv\{v_m^K(t,\cdot),t\in[0,T]\}$ 
%$d\mathbf{P}\times dt$-almost surely 
satisfying
\begin{align*}
%\label{eq:analy-solv}
v_m^K(t,\cdot)&=v_0(\cdot)+\int_0^tAv_m^K(s,\cdot)ds
+\int_0^t g(s,\cdot,v_m^K(s,\cdot))ds
+\int_0^{t+}\Sigma(s-,\cdot,v_m^K(s-,\cdot))dZ_m^K(s).
\end{align*}

%Given $K\geq1$ and $m\geq1$,
%we now prove the comparison theorem on $H_0^1$-valued c\`{a}dl\`{a}g
%solutions $u_m^K,v_m^K$. 
Let $\Phi_{\eta}:H\rightarrow\mathbb{R}$ be an
operator defined by
$
\Phi_{\eta}(h):=\int_{0}^L\psi_{\eta}(h(x))dx, h\in H.
$
Then, by (\ref{eq:eta1}), 
$\Phi_{\eta}$ is twice Fr\'{e}chet differentiable.
%The first order Fr\'{e}chet derivative $\Phi_{\eta}^{'}(h)
%\in\mathcal{L}(H,\mathbb{R})$ is given by
%\begin{align}
%\label{eq:deri-Phi}
%\Phi_{\eta}^{'}(h)(k)=
%\int_0^L\psi_{\eta}^{'}(h(x))k(x)dx,
%\quad k\in H,
%\end{align}
%and the second order Fr\'{e}chet derivative
%$\Phi_{\eta}^{''}(h)\in\mathcal{L}(H,\mathcal{L}(H,\mathbb{R}))$ is given by
%\begin{align*}
%\Phi_{\eta}^{''}(h)(k_1)(k_2)=\int_0^L\psi_{\eta}^{''}(h(x))k_1(x)k_2(x)dx,\,\, k_1,k_2\in H.
%\end{align*}
Set $w_m^K(t,\cdot):=u_m^K(t,\cdot)-v_m^K(t,\cdot)$. 
%\begin{align*}
%w_m^K(t,\cdot)
%&=w_m^K(0,\cdot)-\int_0^tAw_m^K(s,\cdot)ds
%+\int_0^t[f(s,\cdot,u_m^K(s,\cdot))-g(s,\cdot,v_m^K(s,\cdot))]ds
%\\
%&\quad+\int_0^{t+}\int_0^L\int_{\mathbb{R}\setminus\{0\}}G_m^K(s-,\cdot,y,z)\tilde{N}^K(ds,dy,dz)
%\end{align*}
%$d\mathbf{P}\times dt$-almost surely, where
%\begin{align}
%\label{eq:GmK}
%G_m^K(s,\cdot,y,z):=\sum_{n=1}^m[\varphi(s,\cdot,u_m^K(s,\cdot))-
%\varphi(s,\cdot,v_m^K(s,\cdot))]h_n(\cdot)h_n(y)z.
%\end{align}
By It\^{o}'s formula for infinite dimensional
semi-martingale
(see, e.g.,\cite[Theorem 3.7.2]{Mandrekar:2015}), one can show that
\begin{align}
\label{eq:ito}
\Phi_{\eta}(w_m^K(t,\cdot))
&=
\Phi_{\eta}(w_m^K(0,\cdot))
+
\int_0^t\langle\psi_{\eta}^{'}(w_m^K(s,\cdot)),Aw_m^K(s,\cdot)
\rangle ds
\nonumber\\
&\quad+
\int_0^t(\psi_{\eta}^{'}(w_m^K(s,\cdot)),f(s,\cdot,u_m^K(s,\cdot))-g(s,\cdot,v_m^K(s,\cdot)))ds
\nonumber\\
&\quad+\int_0^{t+}\int_0^L
\int_{\mathbb{R}\setminus\{0\}}\{
\Phi_{\eta}(w_m^K(s-,\cdot)+G_m^K(s-,\cdot,y,z))
\nonumber\\
&\quad\quad\quad\quad\quad\quad
\quad\quad\quad
-
\Phi_{\eta}(w_m^K(s-,\cdot))\}\tilde{N}^K(ds,dy,dz)
\nonumber\\
&\quad+\int_0^{t}\int_0^L
\int_{\mathbb{R}\setminus\{0\}}\{
\Phi_{\eta}(w_m^K(s,\cdot)+G_m^K(s,\cdot,y,z))-
\Phi_{\eta}(w_m^K(s,\cdot))
\nonumber\\
&\quad\quad\quad\quad\quad\quad
\quad\quad\quad
-(\psi_{\eta}^{'}(w_m^K(s,\cdot)),
G_m^K(s,\cdot,y,z))\}dsdy\nu_{\alpha}^K(dz).
\end{align}
Here $(\cdot,\cdot)$ is the inner product of $H$, 
$\langle\cdot,\cdot\rangle$ is the duality product between $H^{-1}$ (dual space of $H^1_0$) and $H^1_0$, and 
$$G_m^K(s,\cdot,y,z):=\sum_{n=1}^m[\varphi(s,\cdot,u_m^K(s,\cdot))-
\varphi(s,\cdot,v_m^K(s,\cdot))]h_n(\cdot)h_n(y)z.$$

Similar to the estimates 
in the proof of Theorem \ref{th:comparison}, one can get
\begin{align*}
\mathbf{E}[||(w_m^K(t,\cdot))^{+}||_H^2]
&\leq C_{m,\alpha,K}\int_0^t\mathbf{E}[||(w_m^K(s,\cdot))^{+}||_H^2]ds.
\end{align*}
The Gronwall lemma and the right continuity of the sample path of $w_m^K$ imply that  
\begin{align}
\label{eq:com}
\mathbf{P}\left[u_m^K(t,\cdot)\leq v_m^K(t,\cdot)\,\,\,\text{for all}\,\,t\in[0,T]\right]=1.
\end{align}

We now show that for any given $T>0$
\begin{align}
\label{eq:key-equality}
&\lim_{m\rightarrow\infty}\sup_{0\leq t\leq T}
\mathbf{E}[||u^K(t,\cdot)-u_m^K(t,\cdot)||_H^2]=0,
\\
&\label{eq:key-equality2}
\lim_{m\rightarrow\infty}\sup_{0\leq t\leq T}
\mathbf{E}[||v^K(t,\cdot)-v_m^K(t,\cdot)||_H^2]=0,
\end{align}
where $u^K(t,\cdot)$ and $v^K(t,\cdot)$ are the strong
$H$-valued c\`{a}dl\`{a}g
solutions of equations
(\ref{truncated-eq}) and (\ref{truncated-eq2}).
We just prove (\ref{eq:key-equality}) since  (\ref{eq:key-equality2}) is similar. By (\ref{eq:LnK}) and (\ref{eq:ZmK}),
the mild form of equation 
(\ref{eq:evolutionequation}) is
\begin{align*}
%\label{eq:mild-evolution}
u_m^K(t,x)&=
\int_0^LG_t(x,y)u_0(y)dy
+\int_0^{t}\int_0^LG_{t-s}(x,y)
f(s,y,u_m^K(s,y))dsdy
\nonumber\\
&\quad+\sum_{n=1}^{m}
\int_0^{t+}\int_0^L\int_{\mathbb{R}\setminus\{0\}} \int_0^LG_{t-s}(x,y)
\varphi(s-,y,u_m^K(s-,y))
h_n(y)dyh_n(w)z\tilde{N}^K(ds,dw,dz)
\end{align*}
for all $t\in [0,\infty)$ and a.e. $x\in [0, L]$. 
By rewriting (\ref{eq:mild-truncted}) with the form as
\begin{align*}
u^K(t,x)&=\int_0^LG_t(x,y)u_0(y)dy+\int_0^t\int_0^LG_{t-s}(x,y)f(s,y,u^K(s,y))dsdy
\\
&\quad+\sum_{n=1}^{m}
\int_0^{t+}\int_0^L\int_{\mathbb{R}\setminus\{0\}} \int_0^LG_{t-s}(x,y)
\varphi(s-,y,u^K(s-,y))
h_n(y)dyh_n(w)z\tilde{N}^K(ds,dw,dz)
\nonumber\\
&\quad+\int_0^{t+}\int_0^L
\int_{\mathbb{R}\setminus\{0\}}G_{t-s}(x,y)
\varphi(s-,y,u^K(s-,y))z\tilde{N}^K(ds,dy,dz)
\\
&\quad-\sum_{n=1}^{m}
\int_0^{t+}\int_0^L\int_{\mathbb{R}\setminus\{0\}} \int_0^LG_{t-s}(x,y)
\varphi(s-,y,u^K(s-,y))
h_n(y)dyh_n(w)z\tilde{N}^K(ds,dw,dz),
\end{align*} 
we have
\begin{align*}
u^K(t,x)-u_m^K(t,x)=A_m^K(t,x)+B_m^K(t,x)+C_m^K(t,x),
\end{align*}
where
\begin{align*}
A_m^K(t,x)&=\int_0^t\int_0^LG_{t-s}(x,y)
(f(s,y,u^K(s,y)-f(s,y,u_m^K(s,y))dsdy,
\\
B_m^K(t,x)&=\sum_{n=1}^{m}
\int_0^{t+}\int_0^L\int_{\mathbb{R}\setminus\{0\}}
\int_0^L G_{t-s}(x,y)
\nonumber\\
&\quad\quad\quad\quad\quad\times
[\varphi(s-,y,u^K(s-,y))-\varphi(s-,y,u_m^K(s-,y))]
h_n(y)dyh_n(w)z\tilde{N}^K(ds,dw,dz),
\\
C_m^K(t,x)&=\int_0^{t+}\int_0^L\int_{\mathbb{R}\setminus\{0\}}
(\xi_{t,x}^{K}(s-,y)-
\xi_{t,x}^{m,K}(s-,y))z\tilde{N}^K(ds,dy,dz),
\end{align*}
in which
\begin{align}
\label{eq:Psik}
\xi_{t,x}^{K}(s,y)&=G_{t-s}(x,y)
\varphi(s,y,u^K(s,y)),
\end{align}
\begin{align}
\label{eq:Psimk}
\xi_{t,x}^{m,K}(s,y)&=\sum_{n=0}^{m}
\left(\int_0^L
G_{t-s}(x,r)\varphi(s,r,u^K(s,r))h_n(r)dr\right)
h_n(y).
\end{align}

By (\ref{eq:Psik})-(\ref{eq:Psimk}), 
\begin{align*}
\xi_{t,x}^{m,K}(s,y)=\sum_{n=1}^m
(\xi_{t,x}^{K}(s,\cdot),h_n(\cdot))h_n(y).
\end{align*}
For each fixed $K\geq1,s\leq t\in[0,T]$
and $x\in[0,L]$,
it holds by {\rm Hypothesis} {\rm\ref{hypotheses} \rm(i)} and (\ref{eq:truncated-moment}) for $p=2$
that
\begin{align*}
||\xi_{t,x}^{K}(s,\cdot)||_H^2
%&=\int_0^L|G_{t-s}(x,y)
%\varphi(s,y,u^K(s,y))|^2dy
&\leq C\int_0^L|G_{t-s}(x,y)|^2(1+|u^K(s,y))|^2)dy
\\
&\leq C\sup_{y\in[0,L]}|G_{t-s}(x,y)|^2(L+\sup_{s\in[0,T]}||u^K(s,\cdot)||_H^2)<\infty.
%\\
%&\leq C\sup_{y\in[0,L]}|G_{t-s}(x,y)|^2
%\\
%&<\infty.
\end{align*}
Therefore, for each fixed $K\geq1,s\leq t\in[0,T]$
and $x\in[0,L]$
\begin{align}
\label{eq:Psi1}
||\xi_{t,x}^{m,K}(s,\cdot)||_H\leq ||\xi_{t,x}^{K}(s,\cdot)||_H
\,\,\text{and}\,\,
%\label{eq:Psi2}
\lim_{m\rightarrow\infty}||\xi_{t,x}^{K}(s,
\cdot)-\xi_{t,x}^{m,K}(s,\cdot)||_H=0.
\end{align}

We now respectively estimate $A_m^K(t,x), B_m^K(t,x)$ and $C_m^K(t,x)$. For $A_m^K(t,x)$, it follows from the H\"{o}lder inequality and (\ref{eq:Greenetimation2}) that
\begin{align}
\label{eq:AmK}
\mathbf{E}[||A_m^K(t,\cdot)||_H^2]
%\nonumber\\
%&\leq C\int_0^L\mathbf{E}\left[\int_0^t\int_0^L|G_{t-s}(x,y)
%(f(s,y,u^K(s,y)-f(s,y,u_m^K(s,y))|^2dsdy\right]dx
%\nonumber\\
\leq C\mathbf{E}\left[\int_0^t\int_0^L|t-s|^{-\frac{1}{2}}
|f(s,y,u^K(s,y)-f(s,y,u_m^K(s,y))|^2dsdy\right].
\end{align}

For $B_m^K(t,x)$, it follows from the uncorrelatedness of
$(L_n^K)_{n\geq1}$, the
Burkholder-Davis-Gundy  inequality and (\ref{eq:jumpestimate}) with $p=2$ that
\begin{align*}
\mathbf{E}[||B_m^K(t,\cdot)||_H^2]
%&=\int_0^L
%\mathbf{E}\Bigg[\Bigg|\int_0^{t+}\int_0^L\int_0^{\infty}
%\sum_{n=0}^{m}\Bigg(\int_0^LG_{t-s}(x,y)
%[\varphi(s-,y,u^K(s-,y))
%\\
%&\quad\quad\quad-\varphi(s-,y,u_m^K(s-,y))]
%h_n(y)dy\Bigg)h_n(w)ds,dw)
%\Bigg|^2\Bigg]dx
%\\
&\leq C_{\alpha,K}\int_0^L\mathbf{E}\Bigg[
\int_0^{t+}\int_0^L
\sum_{n=0}^{m}\Bigg(\int_0^LG_{t-s}(x,y)
[\varphi(s-,y,u^K(s-,y))
\\
&\quad\quad\quad-\varphi(s-,y,u_m^K(s-,y))]
h_n(y)dy\Bigg)^2h_n(w)^2dsdw
\Bigg]dx.
\end{align*}
Note that
\begin{align*}
&\sum_{n=0}^{m}\Bigg(\int_0^LG_{t-s}(x,y)
[\varphi(s-,y,u^K(s-,y))-\varphi(s-,y,u_m^K(s-,y))]
h_n(y)dy\Bigg)^2
%\\
%&=\sum_{n=0}^{m}(G_{t-s}(x,\cdot)
%[\varphi(s-,\cdot,u(s-,\cdot))
%-\varphi(s-,\cdot,u_m^K(s-,\cdot))],h_n)^2
\\
&\leq||G_{t-s}(x,\cdot)
[\varphi(s-,\cdot,u^K(s-,\cdot))
-\varphi(s-,\cdot,u_m^K(s-,\cdot))]||_H^2.
\end{align*}
Then by (\ref{eq:h_n}) and (\ref{eq:Greenetimation2}), one can show that
\begin{align}
\label{eq:BmK}
\mathbf{E}[||B_m^K(t,\cdot)||_H^2]
&\leq C_{\alpha,K}
\int_0^L
\mathbf{E}\Bigg[\int_0^{t}||G_{t-s}(x,\cdot)
[\varphi(s,\cdot,u^K(s,\cdot))
-\varphi(s,\cdot,u_m^K(s,\cdot))]||_H^2
ds\Bigg]dx
\nonumber\\
%&= C_{\alpha,K}\int_0^L\mathbf{E}\Bigg[\int_0^{t}
%\int_0^L|G_{t-s}(x,y)|^2
%[\varphi(s,y,u^K(s,y))-\varphi(s,y,u_m^K(s,y))]^2
%dyds\Bigg]dx
%\nonumber\\
%&\leq C_{\alpha,K}\int_0^L\mathbf{E}\Bigg[\int_0^{t}
%\int_0^L|G_{t-s}(x,y)|^p
%|\varphi(s,y,u^K(s,y))-\varphi(s,y,u_m^K(s,y))|^p
%dyds\Bigg]dx
&\leq C_{\alpha,K}\mathbf{E}\Bigg[\int_0^{t}
\int_0^L|t-s|^{-\frac{1}{2}}
|\varphi(s,y,u^K(s,y))-\varphi(s,y,u_m^K(s,y))|^2
dyds\Bigg].
\end{align}

Given $m\geq1$ and $K\geq1$, let us define
$F_m^K(t):=
\mathbf{E}\left[||u^K(t,\cdot)-u_m^K(t,
\cdot)||_H^2\right],\,\,t\in[0,T].$
By (\ref{eq:AmK})-(\ref{eq:BmK}) and
 {\rm Hypothesis} {\rm\ref{hypotheses} \rm(i)}, we have
% for $p\in(\alpha,2]$
\begin{align}
\label{eq:ABmK}
\mathbf{E}[||A_m^K(t,\cdot)+B_m^K(t,\cdot)||_H^2]
\leq C_{p,T,\alpha,K}\int_0^{t}|t-s|^{-\frac{1}{2}}
F_m^K(s)ds.
\end{align}

For $C_m^K(t,x)$, the
Burkholder-Davis-Gundy inequality and  (\ref{eq:element-inequ})-(\ref{eq:jumpestimate}) together imply that
%for $p\in(\alpha,2]$
\begin{align*}
\mathbf{E}[||C_m^K(t,\cdot)||_H^2]
%&=\mathbf{E}\Bigg[\int_0^L\Bigg|
%\int_0^{t+}\int_0^L\int_0^{\infty}
%(\xi_{t,x}^{n}(s-,y)-
%\xi_{t,x}^{m,n}(s-,y))z\tilde{N}(ds,dy,dz)
%\Bigg|^pdx\Bigg]
%\\
%&\leq C_{\alpha,K}\int_0^L\mathbf{E}\Bigg[
%\int_0^{t}\int_0^L|\xi_{t,x}^{K}(s,y)-
%\xi_{t,x}^{m,K}(s,y)|^2dyds\Bigg]dx
%\\
\leq C_{\alpha,K} \int_0^L\mathbf{E}\Bigg[
\int_0^{t}||\xi_{t,x}^{K}(s,\cdot)-
\xi_{t,x}^{m,K}(s,\cdot)||_H^2ds\Bigg]dx.
\end{align*}
%where the last inequality follows from H\"{o}lder's inequality.

%By (\ref{eq:Psik})-(\ref{eq:Psimk}), it is easy
%to see that
%%the definitions of $\Psi_{t,x}^{K}$
%%and $\Psi_{t,x}^{m,K}$,
%\begin{align*}
%\Psi_{t,x}^{m,K}=\sum_{n=1}^m
%\langle\Psi_{t,x}^{K},h_n\rangle_Hh_n,
%\end{align*}
%and so for $p\in(\alpha,2]$
%\begin{align*}
%||\Psi_{t,x}^{m,K}||_H^p\leq ||\Psi_{t,x}^{K}||_H^p
%\end{align*}
%and
%\begin{align*}
%\lim_{m\rightarrow\infty}||\Psi_{t,x}^{K}(s,
%\cdot)-\Psi_{t,x}^{m,K}(s,\cdot)||_H^p=0.
%\end{align*}

By (\ref{eq:Psi1}), {\rm Hypothesis} {\rm\ref{hypotheses} \rm(i)} and
(\ref{eq:truncated-moment}) for $p=2$,
we obtain
%that for $p\in(\alpha,2]$
\begin{align*}
\mathbf{E}[||C_m^K(t,\cdot)||_H^2]
&\leq C_{\alpha,K}\int_0^L\mathbf{E}\left[\int_0^t||\xi_{t,x}^{K}(s,
\cdot)||_H^2ds\right]dx
\\
%&=C_{\alpha,K}\int_0^L\mathbf{E}\left[\int_0^t\int_0^L|G_{t-s}(x,y)
%\varphi(s,y,u^K(s,y))|^2dyds
%\right]dx
%\\
%&\leq C_{\alpha,K}\int_0^L\mathbf{E}\left[\int_0^t\int_0^L|G_{t-s}(x,y)
%\varphi(s,y,u^K(s,y))|^2dyds
%\right]dx
&\leq C_{\alpha,K}\int_0^T|T-s|^{-\frac{1}{2}}ds
\left(1+\sup_{0\leq t\leq T }\mathbf{E}\left[||u^K(t,\cdot)||_H^2\right]\right)<\infty.
\end{align*}
Therefore, 
Lebesgue's dominated convergence theorem implies that
$\lim_{m\rightarrow\infty}
\mathbf{E}[||C_m^K(t,\cdot)||_H^2]=0.$
Combine the estimate (\ref{eq:ABmK}), for arbitrary 
$\epsilon>0$, there exists a constant $M$
such that for all $m\geq M$ we have
\begin{align*}
F_m^K(t)\leq C_{T,\alpha,K}\int_0^{t}|t-s|^{-\frac{1}{2}}
F_m^K(s)ds+C\epsilon,\,\, t\in[0,T].
\end{align*}
By the generalized Gronwall lemma 
(see, e.g., \cite[Theorem 1.2]{Lin:2013}) 
%\begin{align*}
%F_m^K(t)&\leq C\epsilon+C\epsilon\int_0^t\left(\sum_{n=1}^{\infty}
%\dfrac{(C_{T,\alpha,K}\Gamma{(1/2)})^n}{\Gamma(n/2)}(t-s)^{n/2-1}\right)ds
%\\
%&\leq C\epsilon\left(1+\sum_{n=1}^{\infty}
%\dfrac{(C_{T,\alpha,K}\Gamma{(1/2)})^n}{n\Gamma(n/2)}\right)
%<\infty.
%\end{align*}
and by letting $m\uparrow\infty,\epsilon\downarrow0$, we obtain
\begin{align*}
\lim_{m\rightarrow\infty}\sup_{0\leq t\leq T }
\mathbf{E}[||u^K(t,\cdot)-u_m^K(t,\cdot)||_H^2]
=
\lim_{m\rightarrow\infty}\sup_{0\leq t\leq T }
F_m^K(t)=0.
\end{align*}

Therefore, for each $K\geq1$ and $t\in[0,\infty)$,
%and for a.e. $x\in[0,L]$,
there exists a sub-sequence
$(m_k)_{k\geq1}$ such that
\begin{align*}
\mathbf{P}\left[u^K(t,\cdot)=
\lim_{k\rightarrow\infty}u_{m_k}^K(t,\cdot)\right]=
\mathbf{P}\left[v^K(t,\cdot)=
\lim_{k\rightarrow\infty}v_{m_k}^K(t,\cdot)\right]=1.
\end{align*}
Hence, 
by (\ref{eq:com})-(\ref{eq:key-equality2}) and the right continuity of sample paths of $u^K$ and $v^K$,
we obtain
\begin{align*}
\mathbf{P}\left[u^K(t,\cdot)\leq v^K(t,\cdot)\,\,\,\text{for all}\,\,t\geq0\right]=1.
\end{align*}
Recalling (\ref{eq:solution}), the proof is finished by letting $K\uparrow+\infty$.

\noindent \textbf{Acknowledgements}~~
%The authors would like to thank Prof. Min Niu and Prof. Bin Xie for providing their article
%\cite{Niu:2019} as our
%important reference.
This work is supported by the National Natural Science Foundation of China (NSFC) (Nos. 11631004, 71532001) and Natural Sciences and Engineering Research Council of Canada (RGPIN-2021-04100).


\begin{thebibliography}{99}
\bibitem{Albeverio:1998}
Albeverio, Sergio and Wu, Jiang-Lun and Zhang, Tu-Sheng.
(1998). Parabolic {SPDE}s driven by {P}oisson white noise.
Stochastic Process. Appl. {\bf 71}(1) 21--36.

\bibitem{Applebaum:2009}
Applebaum, David. L\'{e}vy processes and stochastic calculus.
{Cambridge Studies in Advanced Mathematics}, {\bf 116}, 2nd, {Cambridge University Press, Cambridge}, 2009.

\bibitem{Balan:2014}
 Balan, Raluca M. (2014). S{PDE}s with {$\alpha$}-stable {L}\'{e}vy noise: a random field approach, Int. J. Stoch. Anal. pp. 1--22.
%{MR3166748} {https://doi.org/10.1155/2014/793275}

\bibitem{Bo:2006}
Bo, Lijun and Wang, Yongjin. (2006). Stochastic {C}ahn
{H}illiard partial differential equations with {L}\'{e}vy space-time white noises.
Stoch. Dyn. \textbf{6}(2) 229--244.

\bibitem{Chen:2017}
Chen, Le and Kim, Kunwoo. (2017).
On comparison principle and strict positivity of solutions to the nonlinear stochastic fractional heat equations.
Ann. Inst. Henri Poincar{\'e} Probab. Stat.
\textbf{53}(1) 358--388.

\bibitem{Chen:2019}
Chen, Le and Huang, Jingyu. (2019).
Comparison principle for stochastic heat equation on {$\mathbb{R}^d$}. Ann. Probab. \textbf{47} 989--1035.

%\bibitem{Chen:2020}
%Chen, Le and Kim, Kunwoo. (2020).
%Stochastic comparisons for stochastic heat equation. Electron. J. Probab. \textbf{25}(140)
%1--38.

%\bibitem{Chong:2019}
%Chong, Carsten and Kevei, P{\'e}ter. (2019).
%Intermittency for the stochastic heat equation with L{\'e}vy noise. Ann. Probab. \textbf{47}(4)
%1911--1948.


%\bibitem{Conus:2013}
%Conus, Daniel and Joseph, Mathew and Khoshnevisan, Davar. (2013). On the chaotic character of the stochastic heat equation, before the onset of intermittency. Ann. Probab.
%\textbf{41}(3B) 2225--2260.

\bibitem{Dareiotis:2014}
Dareiotis, Konstantinos Anastasios and Gy\"{o}ngy, Istv\'{a}n. (2014).
A comparison principle for stochastic integro-differential equations. Potential Anal.
\textbf{41}(4) 1203--1222.

\bibitem{Denis:2009}
Denis, Laurent and Matoussi, Anis and Stoica, Lucretiu. (2009).
Maximum principle and comparison theorem for quasi-linear stochastic PDE's.
Electron. J. Probab. \textbf{14} 500--530.

%\bibitem{Denis:2013}
%Denis, Laurent and Matoussi, Anis. (2013).
%Maximum principle for quasilinear SPDE's on a bounded domain without regularity assumptions.
%Stochastic Process. Appl.
%\textbf{123}(3) 1104--1137.
%
%\bibitem{Denis:2014}
%Denis, Laurent and Matoussi, Anis and Zhang, Jing. (2014). Maximum principle for quasilinear stochastic PDEs with obstacle. Electron. J. Probab. \textbf{19} 1--32.

\bibitem{Donati:1993}
Donati-Martin, Catherine and Pardoux, Etienne. (1993).
White noise driven {SPDE}s with reflection.
Probab. Theory Related Fields \textbf{95}(1)  1--24.

\bibitem{Feller:1971}
Feller, William. An Introduction to Probability Theory
and Its Applications. Volume 2 (2nd edition), Wiley, New York, 1971.


\bibitem{Gyongy:1982}
Gy\"{o}ngy, Istv\'{a}n. (1982). On stochastic equations with respect to semimartingales {III}. Stochastics. \textbf{7}(4) {231--254}.

%\bibitem{Foondun:2009}
%Foondun, Mohammud and Khoshnevisan, Davar. (2009).
%Intermittence and nonlinear parabolic stochastic partial differential equations. Electron. J. Probab. \textbf{14}(21) 548--568.
%\bibitem{Foondun:2018}
%Foondun, Mohammud and Nualart, Eulalia. (2018).
%Spatial asymptotics and strong comparison principle for some fractional stochastic heat equations. arXiv preprint arXiv:1810.04949.

%\bibitem{Funaki:2001}
%Funaki, Tadahisa and Olla, Stefano. (2001).
%Fluctuations for $\nabla \varphi$ interface model on a wall. Stochastic Process.
%Appl. \textbf{94}(1) 1--27.

%\bibitem{Joseph:2017}
%Joseph, Mathew and Khoshnevisan, Davar and Mueller, Carl. (2017). Strong invariance and noise-comparison principles for some parabolic stochastic PDEs. Ann. Probab. \textbf{45}(1)
%377--403.

\bibitem{Kotelenez:1992}
Kotelenez, Peter. (1992).
Comparison methods for a class of function valued stochastic partial differential equations. Probab. Theory
Related Fields \textbf{93}(1) 1--19.

\bibitem{Lin:2013}
Lin, Shi-you. (2013). Generalized {G}ronwall inequalities and their applications to fractional differential equations, J. Inequal. Appl. \textbf{2013}(1) 1--9.


\bibitem{Mandrekar:2015}
Mandrekar, Vidyadhar and R\"{u}diger, Barbara. {Stochastic integration in {B}anach spaces-Theory and applications}, {Probability Theory and Stochastic Modelling}, \textbf{73}, 
{Springer, Cham}, 2015.

\bibitem{Moreno Flores:2014}
Moreno Flores, Gregorio R. (2014). On the (strict) positivity of solutions of the stochastic heat equation. Ann. Probab.
\textbf{42}(4) 1635--1643.


%\bibitem{Mueller:1991}
%Mueller, Carl. (1991).
%On the support of solutions to the heat equation
%with noise. Stochastics Stochastics Rep.
%\textbf{37}(4) 225--245.
%{MR1149348}
%{https://doi.org/10.1080/17442509108833738}

\bibitem{Mytnik:2002}
Mytnik, Leonid. (2002).
Stochastic partial differential
equation driven by stable noise, Probab. Theory
Related Fields \textbf{123}(2) 157--201.
%{MR1900321}
%{https://doi.org/10.1007/s004400100180}

\bibitem{Niu:2019}
Niu, Min and Xie, Bin. (2019). Comparison theorem and
correlation for stochastic heat equations driven
by {L}\'{e}vy space-time white noises.
Discrete Contin. Dyn. Syst. Ser. B.
\textbf{24}(7) 2989--3009.
%{MR3986189}
%{https://doi.org/10.3934/dcdsb.2018296}

\bibitem{Peszat:2006}
Peszat, Szymon and Zabczyk, Jerzy. Stochastic heat and wave equations driven by an impulsive noise. Stochastic partial differential equations and
applications---{VII}, Lect. Notes Pure Appl. Math. {245}, 229--242, Chapman \& Hall/CRC, Boca Raton, FL, 2006.

\bibitem{Peszat:2007}
Peszat, Szymon and Zabczyk, Jerzy. Stochastic
partial differential equations with {L}\'{e}vy noise. Encyclopedia of Mathematics and its
Applications, {113}, pp. 104, Cambridge University Press, Cambridge, 2007.

%\bibitem{Shiga:1994}
%Shiga, Tokuzo. (1994). Two contrasting properties of solutions for one-dimensional stochastic partial differential equations. Canad. J. Math. \textbf{46} (2) 415--437.

\bibitem{Truman:2003}
Truman, Aubrey and Wu, Jiang-Lun. (2003). Stochastic {B}urgers equation with {L}\'{e}vy space--time white noise. Probabilistic Methods in fluids \textbf{298} 298--323.

\bibitem{Walsh:1986}
Walsh, John B. An Introduction to Stochastic Partial Differential Equations. In \'{E}cole d'\'{e}t\'{e} de Probabilit\'{e}s de
Saint--Flour, XIV--1984. Lecture Notes in Math. {1180}, 265--439, Springer, Berlin, 1986.
MR0876085 https://doi.org/10.1007/BFb0074920

\bibitem{Wang:2022}
Wang, Yongjin and Yan, Chengxin and Zhou, Xiaowen. (2022). Existence  of weak solutions to
 stochastic heat equations driven
by truncated  $\alpha$-stable white noises with non-Lipschitz coefficients. arxiv:2208.00820v1.

\bibitem{Xiong:2019}
Xiong, Jie and Yang, Xu. (2019). Existence and pathwise uniqueness to an {SPDE} driven by
{$\alpha$}-stable colored noise.
Stochastic Process. Appl.
\textbf{129}(8) 2681--2722.
%{MR3980141} {https://doi.org/10.1016/j.spa.2018.08.003}

\bibitem{Xiong:2020}
Xiong, Jie and Yang, Xu. (2020). {SPDEs with non-Lipschitz coefficients and nonhomogeous boundary conditions}.
arxiv:2006.01009v1.

\bibitem{Yang:2017}
Yang, Xu and Zhou, Xiaowen. (2017).
 Pathwise uniqueness for an
{SPDE} with {H}\"{o}lder continuous coefficient
driven by {$\alpha$}-stable noise. Electron. J. Probab. \textbf{22}(4) 1--48.
%
%\bibitem{Zhang:2016}
%Zhang, Tusheng. (2016). Lattice approximations of reflected stochastic partial differential equations driven by space-time white noise. Ann. Appl. Probab. \textbf{26}(6) 3602--3629.



\end{thebibliography}
\end{document}